\documentclass[12pt,a4paper]{article}

\usepackage{jansstylefile} 
\usepackage{enumerate}


\title{Necessary and sufficient conditions for identifiability in the admixture model}
\author{Jan van Waaij\footnote{The author is supported by the Independent Research Fund Denmark (grant number: 8021-00360B) and the University of Copenhagen through the Data+ initiative.}}

\begin{document}

\abstract{
We consider $M$ SNP data from $N$ individuals who are an admixture of $K$ unknown ancient populations. 
Let $\Pi_{si}$ be the frequency of the reference allele of individual $i$ at SNP $s$. So the number of reference alleles at SNP $s$ for a diploid individual is distributed as $\binomial(2,\Pi_{si})$.  
We suppose $\Pi_{si}=\sum_{k=1}^KF_{sk}Q_{ki}$, where $F_{sk}$ is the allele frequency of SNP $s$ in population $k$ and $Q_{ki}$ is the proportion of population $k$ in the ancestry of individual $i$.
 I am interested in the identifiability of $F$ and $Q$, up to a relabelling of the ancient populations. Under what conditions, when $\Pi =F^1Q^1=F^2Q^2$ are $F^1$ and $F^2$ and $Q^1$ and $Q^2$ equal? I show that the anchor condition (\cite{CabrerosStorey2019}) on one matrix together with an independence condition on the other matrix is sufficient for identifiability. 
  I will argue that the proof of the necessary condition in \cite{CabrerosStorey2019} is incorrect, and I will provide a correct proof, which in addition does not require knowledge of the number of ancestral populations. 
  I will also provide abstract necessary and sufficient conditions for identifiability. I will show that one cannot deviate substantially from the anchor condition without losing identifiability. Finally, I show necessary and sufficient conditions for identifiability for the non-admixed case.  }

\section{Introduction}
We consider the following model (see for instance \cite{CabrerosStorey2019,Garcia-ErillAlbrechtsen2020}). There are $N$ individuals, $M$ diallelic sites and $K$ ancestral populations.  Let $Q$ be the $K\x N$ matrix of of admixture proportions, so fraction $Q_{ki}$ of individual $i$'s genome comes from population $k$. Note that the $i$-th column of $Q$, $Q_{\star i}$, is a probability vector, $i\in\set{1,\ldots,N}$. That is, $Q_{ki}\ge 0$ for all $k\in\set{1,\ldots,K}$ and $i\in \set{1,\ldots,N}$ and $\sum_{k=1}^KQ_{ki}=1$, for all $i\in\set{1,\ldots,N}$. Let $F$ be a real $M\x K$ matrix of ancestral SNP frequencies. So $F_{sk}\in[0,1]$, for all $s\in\set{1,\ldots,M}$ and $k\in\set{1,\ldots,K}$. 
So fraction $F_{sk}$ of ancestral population $k$ has the reference allele at locus $s$. Then the expected frequency of the reference allele of SNP $s$ of individual $i$ is \[
\Pi_{si}=(FQ)_{si}=\sum_{k=1}^KF_{sk}Q_{ki}.  
\]

The observed genotype $G_{si}$ of a (diploid) individual is the number (0,1, or 2) of reference variants at SNP $s$ in individual $i$. We assume \begin{equation}\label{eq:admixturemodel}
G_{si}\mid \Pi_{si}\stackrel{\text{independent}}\sim \binomial(2,\Pi_{si}). 
\end{equation}

It follows that $\Pi$ is identifiable, and the law of $G$ only depends on $\Pi$. 
If there are two pairs of matrices $(F^1,Q^1),(F^2,Q^2)$ so that $\Pi=F^1Q^1=F^2Q^2$, the data cannot decide between $(F^1,Q^1)$ or $(F^2,Q^2)$.  In this case, we cannot say how much individual $i$ inherits from ancient population $k$. Is it $Q_{ki}^1$ or $Q_{ki}^2$?

Of course, identifiability is not the same as consistency (i.e. if one has estimators $\hat F,\hat Q$, do they converge to the true $F$ and $Q$). But consistency requires identifiability. It is therefore important to work with models that are identifiable.

As far as I know \cite{CabrerosStorey2019} is the only study that considers identifiability for the admixture model. They introduce the so-called ``anchor condition" on either $F$ or $Q$, as a sufficient condition for identifiability. They require linearly independent columns for $F$ and linearly independent rows for $Q$, respectively. The anchor condition on $F$ means, that for every  $k\in\set{1,\ldots,K}$ there is a row $s$, so that $F_{s\ell}=0$ when $\ell\neq k$, and $F_{sk}>0$. The anchor condition on $Q$ means, that for every $k\in \set{1,\ldots,K}$ there is a column $i$ so that $Q_{\ell i}=0$ when $\ell\neq k$, and $Q_{ki}=1$, because the columns of $Q$ sum to one. 

So if there is an anchor for $k$ at SNP $s$ (so $F_{sk}>0$ and $F_{s\ell}=0$ when $\ell\neq k$), then if an individual has the reference allele at SNP $s$, then it inherits for sure from population $k$. If an individual $i$ is an anchor for $k$ (so $Q_{ki}=1$ and $Q_{\ell i}=0$ for $\ell\neq k$), then it stores pure information from ancient population $k$.  See also \cite[page 1014]{CabrerosStorey2019}. I refer to \cite[page 2]{Aroraea2013} for a discussion of the anchor condition in topic modelling.

The anchor condition first appeared in \cite{DonohoStodden2004} in the context of non-negative matrix decomposition. Under the anchor condition they provide an algorithm to calculate the matrix decomposition of a non-negative matrix in two non-negative matrices of a given rank. The authors use the term separability condition instead of anchor condition. The anchor condition was first used in the context of topic modelling in \cite{Aroraea2012}, who also introduce this term, and in the context of admixture models by \cite{CabrerosStorey2019}.

  
I will argue that the  proof of  identifiability with the anchor condition in \cite{CabrerosStorey2019} is not correct.  In this study I treat  identifiability in a mathematical rigorous way. I will give a correct proof for identifiability with the ``anchor" condition (explained below) of \cite{CabrerosStorey2019} and I will discuss abstract necessary and sufficient conditions. In contrast to \cite{CabrerosStorey2019} I allow the number of ancient populations to be unknown.  

\paragraph{Notation}
By $e_i$, we denote the vector with zero entries, except for entry $i$, which is one.
 By $e$, we denote the vector with all entries equal to one. 
 The dimensions of $e_i$ and $e$ are clear from the context. When $A$ is a matrix, we denote by $A_{i\star }$ the $i$-th row of $A$ and by $A_{\star j}$ the $j$-th column of $A$. Note that $A_{i\star}$ is a row vector and $A_{\star j}$ is a column vector. Let $A$ be a real  $n\x m$-matrix. We denote by $\co(A)=\co(\set{A_{\star1},\ldots, A_{\star m}})$ the convex hull generated by the \textbf{columns} of $A$ (see \cref{def:convexhull}) and by $\cone(A)=\cone(\set{A_{1\star},\ldots,A_{n\star}})$ the cone generated by the \textbf{rows} of $A$ (see \cref{def:coneandwedge,def:wedgeorconegeneratedbysetA}, assuming that it is a cone and not just a wedge). 
\paragraph{}

\section{Identifiability}

Because $\Pi$ is identifiable from the law of $G$, it is sufficient to show that when $\Pi=F^1Q^1=F^2Q^2$, that $F^1$ and $F^2$ and $Q^1$ and $Q^2$ are equal, where $(F^1,Q^1),(F^2,Q^2)$ are pairs of matrices in our model.
As for any permutation $\pi:\set{1,\ldots,K}\to \set{1,\ldots,K}$, 
\[
\Pi_{si}=\sum_{k=1}^KF_{s\pi(k)}Q_{\pi(k)i},
\]
the best we can hope for is that $F^1$ and $F^2$ and $Q^2$ and $Q^2$ are equal up to a permutation of the columns of $F^2$ and the rows of $Q^2$. 
The permutation $\pi$ corresponds to a relabelling of the ancient populations.

Let $K\in\NN$. 
Let $\sF_K$ be the set of real $M\x K$ matrices $F$ with $0\le F_{sk}\le 1$ for all $s\in\set{1,\ldots,M}, k\in\set{1,\ldots,K}$. 
The matrices $F\in\sF_K$ represent the allele frequencies of the ancient populations. 
Let $\sQ_K$ be the set of real $K\x N$ matrices so that $0\le Q_{ki}\le 1$ for all $k\in\set{1,\ldots,K}$ and $i\in\set{1,\ldots,N}$ and the columns of $Q$ sum to one, that is, 
for all $i\in\set{1,\ldots,N}$, $\sum_{k=1}^KQ_{ki}=1$. 

We define an equivalence relation on the pairs $(F,Q)\in \sum_{k=1}^\infty \sF_K\x \sQ_K$. We say that $(F^1,Q^1)$ is equivalent to $(F^2,Q^2)$ when $F_1$ and $F_2$ have the same number of columns $K$ (which is then the number of rows of $Q^1$ and $Q^2$) and there is a permutation $\pi:\set{1,\ldots,K}\to\set{1,\ldots,K}$ of the columns of $F^1$ and the rows of $Q^1$, 
so that $F^2_{sk}=F^1_{s\pi(k)}$ and $Q^2_{ki}=Q^1_{\pi(k)i}$. Notation $(F^1,Q^1)\sim (F^2,Q^2)$. In this case $F^1Q^1=F^2Q^2$. 
When $(F^1,Q^1)$ and $(F^2,Q^2)$ are not equivalent, we write $(F^1,Q^1)\not\sim (F^2,Q^2)$. 

\begin{definition}
    A model subset $\sM\subseteq \bigcup_{K=1}^\infty \sF_K\x \sQ_K$ is identifiable for the admixture model \cref{eq:admixturemodel} if for every $(F^1,Q^1),(F^2,Q^2)\in \sM$, 
    $F^1Q^1=F^2Q^2$ implies that $(F^1,Q^1)\sim (F^2,Q^2)$. 
\end{definition}

 I will now specify the anchor and independence conditions what are sufficient for identifiability. 
 
 Let $\sQ_K^{\text{an}}$ be the subset of $\sQ_K$ of all matrices $Q$ so that for every $k\in\set{1,\ldots, K}$ there is an $i\in\set{1,\ldots,N}$, so that $Q_{ki}=1$ and $Q_{\ell i}=0$ for all $\ell\neq k$. Note that $\sQ_K^{\text{an}}$ is empty when $K>N$.  We say that matrices $Q\in \sQ_K^{\text{an}}$ satisfy the anchor condition on $Q$. The individual $i$ so that  $Q_{i\star}=e_k$, for some $k$, is called an anchor individual, or an unadmixted individual.   
Let $\sF_K^{\text{an}}\subseteq \sF_K $ be the set of matrices so that for each $k\in\set{1,\ldots,K}$, there is an $s\in\set{1,\ldots,M}$ so that for all $\ell\neq k$, $F_{s,\ell}=0$ and $F_{s,k}>0$. Note that $\sF_K^{\text{an}}$ is empty when $K>M$. We say that matrices $F\in \sF_K^{\text{an}}$ satisfy the anchor condition on $F$. A SNP $s$ so that $F_{s\star} = \delta e_k'$, for some $\delta>0$, is called an anchor SNP. 


Let us determine independence conditions on the matrices. Let $\sF_K^{\text{in}}$ be the subset of $\sF_K$ of all matrices $F$ so that  $F_{\star,1}-F_{\star,K},\ldots,F_{\star,K-1}-F_{\star,K}$ are linearly independent vectors. Note that $\sF_K^{\text{in}}$ is empty when $M<K-1$. 
    Let $\sQ_K^{\text{in}}$ be the subset of $\sQ_K$ of all matrices $Q\in\sQ_K$ so that the rows of $Q$ are linearly independent. Note that $\sQ_K^{\text{in}}$ is empty when $K>N$.  
    The independence condition on the columns of $F$ is slightly lighter than the independence condition on the rows of $Q$. This is because we use that the other matrix in the pair has rows that sum to one. This basically reduces the dimension by one.

    \subsection{The argument of \cite{CabrerosStorey2019}}

Let $\tilde \sF_K^{\text{in}}$ be all $F\in \sF_K$ so that all columns of $F$ are linearly independent (i.e. all rank $K$ matrices in $\sF_K$). Note that $\tilde \sF_K^{\text{in}}\subseteq \sF_K^{\text{in}}$. 
\cite{CabrerosStorey2019} assure that for given (known) $K$, $\sF_K^{\text{an}}\x \sQ_K^{\text{id}}$ and $\tilde \sF_K^{\text{id}}\x \sQ_K^{\text{an}}$ are identifiable models.  They ``proof" identifiability for the anchor condition on $F$, and state that the proof with the anchor condition on $Q$ is similar. 
  
The ``proof" of \cite{CabrerosStorey2019} is as follows. First they suppose that for $(F,Q)$ in the model, when $\Pi = FQ$  you may assume that $F$ has the form \begin{equation}\label{eq:blockformofF}
    F = \begin{pmatrix}
        D \\ A
    \end{pmatrix},
\end{equation}
where $D$ is a diagonal matrix with positive entries and $A$ are the $M-K$ bottom rows of $F$. Next they argue that when $\Pi'$ is the submatrix of $\Pi$ formed from the first $K$ rows of $\Pi$, then $Q=D^{-1}\Pi'$. Finally, the fact that the rows of $Q$ are linearly independent, uniquely identifies $A$. 

Although this argument sounds convincing, it is wrong. We have to show that when $(F^1,Q^1),(F^2,Q^2)$ are in the model, so that $\Pi=F^1Q^1=F^2Q^2$ that $(F^1,Q^1)\sim (F^2,Q^2)$. To write $F^1$ in the form \cref{eq:blockformofF}, one has to permute the rows of $F^1$, and hence also the rows of $\Pi$. However, if after a permutation of the rows of $F^1$, $F^1$ is of the form \cref{eq:blockformofF}, then under the same permutation $F^2$ is not necessarily of this form, let alone that $D^1=D^2$ (where $D^1$ and $D^2$ are defined similarly as $D$). 
 At the very least, this should be proven. 
 Doing a different permutation of the rows of $F^1$ and $F^2$ would also not work, as after the transformation, the resulting $\Pi$'s are not necessarily equal, which is crucial in the next step of their proof. So \cite{CabrerosStorey2019} do not establish identifiability. 

Although $\sF_K^{\text{an}}\x \sQ_K^{\text{id}}$ and $\tilde \sF_K^{\text{id}}\x \sQ_K^{\text{an}}$ are identifiable models (according to \cref{thm:identifiabilityanchorinF,thm:identifiabilityanchorinQ} below), \citeauthor{CabrerosStorey2019}'s argument for identifiability is not correct. 
    
    \subsection{Sufficient conditions for identifiability}
    
    In this section I give a correct proofs for identifiability with an anchor condition on one matrix and an independence matrix on the other matrix. I will allow for a slightly weaker condition on the independence of the columns of $F$ compared with \cite{CabrerosStorey2019}. Futhermore, I don't require knowledge of $K$.  In \cref{thm:identifiabilityanchorinF,thm:identifiabilityanchorinQ} I show that $\bigcup_{K=1}^{\infty }\sF_K^{\text{in}}\x \sQ_K^{\text{an}} = \bigcup_{K=1}^{(M+1)\wedge N}\sF_K^{\text{in}}\x \sQ_K^{\text{an}}$ and $\bigcup_{K=1}^{\infty}\sF_K^{\text{an}}\x \sQ_K^{\text{in}}=\bigcup_{K=1}^{M\wedge N}\sF_K^{\text{an}}\x \sQ_K^{\text{in}}$ are identifiable models. In \cref{thm:identifiabilityanchorinFcounterexample,thm:identifiabilityanchorinQcounterexample} I show that the independence requirements are necessary. In the same theorems I show that we cannot deviate from the anchor condition too much without loosing identifiability. Necessary and sufficient conditions are discussed in \cref{sec:discussion}.

Although the proofs of \cref{thm:identifiabilityanchorinQ,thm:identifiabilityanchorinF} have some similarities, we use the theory of convex sets in the first, and the theory of cones in the second theorem. A cone is a subset $K$ of a real vector space, so that for all $x,y\in K$ and $\lambda\ge 0$, $\lambda x$ and $x+y$ are also in $K$. Additionally, when $-K=\set{-x:x\in K}$,  $K\cap(-K)=\set 0$.

\begin{theorem}\label{thm:identifiabilityanchorinQ}
   Define $\sM'=\bigcup_{K=1}^{(M+1)\wedge N}\sF_K^{\text{in}}\x \sQ_K^{\text{an}}$. 
    Then $\sM'$ is an identifiable model.  
    \end{theorem}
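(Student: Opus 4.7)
The plan is to reformulate the anchor condition on $Q^j$ and the probability-simplex constraint on its columns as forcing $\co(F^j)=\co(\Pi)$ for both candidates, and then to recover the columns of $F^j$ as the extreme points of this common convex set. Concretely, fix two pairs $(F^1,Q^1)\in\sF_{K_1}^{\text{in}}\x\sQ_{K_1}^{\text{an}}$ and $(F^2,Q^2)\in\sF_{K_2}^{\text{in}}\x\sQ_{K_2}^{\text{an}}$ with $F^1Q^1=F^2Q^2=:\Pi$. Every column of $\Pi$ is a convex combination of the columns of $F^j$, so $\co(\Pi)\subseteq\co(F^j)$. The anchor condition on $Q^j$ supplies, for each $k\in\set{1,\ldots,K_j}$, an individual $i$ with $Q^j_{\star i}=e_k$, so that $\Pi_{\star i}=F^j_{\star k}$; this gives $\co(F^j)\subseteq\co(\Pi)$, and hence $\co(F^1)=\co(\Pi)=\co(F^2)$.

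Next I would use that $F^j\in\sF_{K_j}^{\text{in}}$ is exactly affine independence of the columns $F^j_{\star 1},\ldots,F^j_{\star K_j}$, so $\co(F^j)$ is a $(K_j-1)$-simplex whose set of extreme points is precisely $\set{F^j_{\star 1},\ldots,F^j_{\star K_j}}$. Since the extreme-point set is an invariant of the convex set, the two collections must coincide; this forces $K_1=K_2=:K$ and yields a permutation $\pi$ of $\set{1,\ldots,K}$ with $F^2_{\star k}=F^1_{\star\pi(k)}$ for every $k$. I expect this vertex-matching step to be the main conceptual content of the proof; the rest is linear algebra.

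To finish, apply $\pi$ to the columns of $F^2$ and the rows of $Q^2$ simultaneously; this replaces $(F^2,Q^2)$ by an equivalent pair and reduces matters to the case $F^1=F^2=:F$, where $F(Q^1-Q^2)=0$. For any column index $i$, set $v:=Q^1_{\star i}-Q^2_{\star i}$; since both columns are probability vectors, $\sum_{k=1}^K v_k=0$, so $v_K=-\sum_{k<K}v_k$ and
\begin{equation*}
0=Fv=\sum_{k=1}^{K-1}(F_{\star k}-F_{\star K})v_k.
\end{equation*}
The independence hypothesis on $F$ forces $v_k=0$ for $k<K$, hence $v=0$ and $Q^1=Q^2$; therefore $(F^1,Q^1)\sim(F^2,Q^2)$. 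The index bound $K\le(M+1)\wedge N$ is automatic, since $\sF_K^{\text{in}}=\emptyset$ whenever $K>M+1$ and $\sQ_K^{\text{an}}=\emptyset$ whenever $K>N$.
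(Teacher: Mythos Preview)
Your proposal is correct and follows essentially the same strategy as the paper: both establish $\co(F^1)=\co(\Pi)=\co(F^2)$ via the anchor condition, identify the columns of each $F^j$ as the extreme points of this common simplex using affine independence, and then recover $Q$ uniquely. The only cosmetic difference is that the paper invokes an appendix lemma on unique convex decompositions for the final step, whereas you unwind that lemma directly via $Fv=\sum_{k<K}(F_{\star k}-F_{\star K})v_k=0$.
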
 
\begin{proof}
The reader may familiarise him- or herself with the theory of convex sets in \cref{sec:convexsets}. 
For an $m\x n$-matrix $A$, we denote by $\co(A)=\co(A_{\star 1},\ldots,A_{\star n})$ the convex hull spanned by the \textbf{columns} of $A$.  

Let $(F^1,Q^1),(F^2,Q^2)\in \sM'$ and suppose that $\Pi=F^1Q^1=F^2Q^2$. Let $K_1$ the number of columns of $F_1$, and $K_2$ the number of columns of $F_2$.  
Note that each column of $\Pi$ is a convex combination of the $K$ columns of $F^1$ (or of $F^2$). It follows that $C:=\co(\Pi)\subseteq \co(F^1)$ and $C\subseteq \co(F^2)$. 

 As $e_1,\ldots,e_K$ are columns in $Q$, it follows that $C $ contains the vectors that generate $\co(F^1)$ and $\co(F^2)$, so $C=\co(F^1)=\co(F^2)$.  
By our assumption, for each $(F,Q)\in \sM'$,  $F_{\star 1}-F_{\star K},\ldots,F_{\star K-1}-F_{\star K}$ are linearly independent. So by \cref{rem:uniquedecompositionbygeneratorsimpliesthatthegeneratorsareextreme}  $\set{F_{\star 1}^1,\ldots,F_{\star K_1}^1}$ and $\set{F_{\star 1}^2,\ldots,F_{\star K_2}^2}$ are two sets of extreme points (see \cref{def:extremeset}) that  generate $C$. It follows from \cref{cor:atmostoneminimalset} that the two sets are equal, in particular $K_1=K_2$. So there is a permutation $\pi:\set{1,\ldots,K_1}\to \set{1,\ldots,K_1}$ so that $F_{\star k}^2=F_{\star \pi(k)}^1$, for all $k\in\set{1,\ldots,K_1}$. 
It follows from \cref{lem:uniqueconvexcombinationintermsofindependentvectors} and the fact that $F_{\star 1}^1-F_{\star K_1}^1,\ldots,F_{\star K_1-1}^1-F_{\star K_1}^1$ are linearly independent that each element in $C$ has a unique convex decomposition in terms of $F_{\star 1}^1,\ldots,F_{\star K_1}^1$. In particular, $Q^2_{k\star }=Q^1_{\pi(k)\star }$, for all $k\in\set{1,\ldots,K_1}$. Thus $(F^1,Q^1)\sim (F^2,Q^2)$ and $\sM'$ is identifiable.
\end{proof}

It turns out that $\sM'$ cannot be substantially enlarged to a model  that is still identifiable:
\begin{theorem}\label{thm:identifiabilityanchorinQcounterexample}
    Let $K\ge 2$ and $N\ge K+1$  and let $F\in \sF_K\weg\sF_K^{\text{in}}$. If $Q\in\sQ_K^{\text{an}}$ contains a column $Q_{\star i}$ so that $Q_{k,i}>0$, for all $k\in\set{1,\ldots,K}$, then there is a $Q^2\in\sQ_K^{\text{an}}$,  so that $FQ=FQ^2$, but $(F,Q)\not\sim (F,Q^2)$. 
    
 Let $K\ge2$ and let $F\in\sF_K^{\text{in}}$ and there is a column $F_{\star k_0}$ and a $0<\delta<1/2$ so that $\delta\le F_{s,k_0}\le 1-\delta$ for all $s\in\set{1,\ldots,M}$  and $Q\in\sQ_K$, then there is a $Q^2\in \sQ_K\weg\sQ_K^{\text{an}}$ and $F^2\in\sF_K^{\text{in}}$, so that $FQ=F^2Q^2$, but $(F,Q)\not\sim (F^2,Q^2)$. The matrix $Q^2$ can be chosen so that $\set{k: e_k \text{ is a column in }Q^2}=\set{k: e_k \text{ is a column in }Q}\weg\set{k_0}$. 
\end{theorem}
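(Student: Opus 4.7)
For Part 1, the plan is to exploit the linear dependence coming from $F\notin \sF_K^{\text{in}}$ to produce a perturbation of $Q$ that still lies in $\sQ_K^{\text{an}}$ but is not a permutation of $Q$. Since $F_{\star 1}-F_{\star K},\ldots,F_{\star K-1}-F_{\star K}$ are linearly dependent, I can build a nonzero $c\in \RR^K$ with $Fc=0$ and $\sum_{k=1}^K c_k=0$ by taking dependence coefficients $a_1,\ldots,a_{K-1}$ and setting $c_j=a_j$ for $j<K$, $c_K=-\sum_{j<K}a_j$. I would then leave every column of $Q$ untouched except the designated column $i$, which I replace by $Q_{\star i}+\epsilon c$ for $\epsilon>0$ small enough that all entries stay in $[0,1]$ (possible because $Q_{k,i}>0$ for every $k$). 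The resulting $Q^2$ satisfies $FQ=FQ^2$ and inherits every anchor column of $Q$ verbatim, because $K\ge 2$ forces any anchor column to have a zero entry whereas $Q_{\star i}$ has none, so $i$ is not itself an anchor column. For non-equivalence, any permutation $\pi$ witnessing $(F,Q)\sim (F,Q^2)$ must satisfy $\delta_{k,l}=Q^2_{k,i_l}=Q_{\pi(k),i_l}=\delta_{\pi(k),l}$ at every anchor column $i_l$ of $Q^2$, which forces $\pi=\mathrm{id}$ and hence $Q^2=Q$, contradicting $\epsilon c\ne 0$.

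For Part 2, fix $k_1\in\set{1,\ldots,K}\weg\set{k_0}$ (which exists since $K\ge 2$) and, for a small $\epsilon>0$ to be specified, consider the column-stochastic matrix $T=I+\epsilon(e_{k_0}-e_{k_1})e_{k_0}^T$ of determinant $1+\epsilon$; set $F^2=FT$ and $Q^2=T^{-1}Q$. Then $F^2Q^2=FQ$ is automatic, and $e^TT=e^T$ gives $e^TQ^2=e^TQ=e^T$, so the columns of $Q^2$ sum to one. Only the $k_0$-th column of $F$ changes, becoming $F^2_{\star k_0}=(1+\epsilon)F_{\star k_0}-\epsilon F_{\star k_1}$; the hypothesis $\delta\le F_{s,k_0}\le 1-\delta$ keeps $F^2\in \sF_K$ whenever $\epsilon\le \delta/(1-\delta)$, and because this new column is an affine combination of $F_{\star k_0}$ and $F_{\star k_1}$ (both retained in $F^2$), the affine hull of the columns is preserved, so $F^2\in \sF_K^{\text{in}}$.

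The Sherman--Morrison formula yields $T^{-1}=I-\frac{\epsilon}{1+\epsilon}(e_{k_0}-e_{k_1})e_{k_0}^T$, from which $Q^2_{k_0,i}=Q_{k_0,i}/(1+\epsilon)$, $Q^2_{k_1,i}=Q_{k_1,i}+\epsilon Q_{k_0,i}/(1+\epsilon)$, and $Q^2_{k,i}=Q_{k,i}$ for $k\notin \set{k_0,k_1}$; nonnegativity is immediate, so $Q^2\in \sQ_K$. An entrywise check shows that $Q^2_{\star i}=e_{k_0}$ would require $Q_{k_0,i}=1+\epsilon>1$, while for $l\ne k_0$ one has $Q^2_{\star i}=e_l$ iff $Q_{\star i}=e_l$; hence the anchor-set identity in the statement holds and $Q^2\notin \sQ_K^{\text{an}}$. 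For non-equivalence, any $\pi$ with $F_{\star k}=F^2_{\star \pi(k)}$ must fix every $k$ with $\pi(k)\ne k_0$ by distinctness of the columns of $F$ (a consequence of $F\in \sF_K^{\text{in}}$ together with $K\ge 2$), and bijectivity then forces $\pi=\mathrm{id}$; the remaining identity $F_{\star k_0}=(1+\epsilon)F_{\star k_0}-\epsilon F_{\star k_1}$ collapses to $F_{\star k_0}=F_{\star k_1}$, contradicting affine independence. The main obstacle is calibrating $T$: it must be column-stochastic, invertible, and keep both $FT$ and $T^{-1}Q$ inside the model simultaneously, which is exactly what the boundedness-away-from-$\set{0,1}$ hypothesis on $F_{\star k_0}$ buys.
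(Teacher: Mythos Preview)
Your argument is correct and follows essentially the same route as the paper's: for Part~1 you perturb the designated non-anchor column by a null vector of $F$ with zero coordinate sum (the paper phrases this via non-uniqueness of convex decompositions, citing its \cref{lem:uniqueconvexcombinationintermsofindependentvectors,lem:propertiesopenconvexset}), and for Part~2 your matrix $T$ with $k_1=1$ and $\epsilon=\delta/(1-\delta)$ is literally the paper's $R^{-1}$. One small slip in Part~2: you write that $F_{\star k_0}$ and $F_{\star k_1}$ are ``both retained in $F^2$'', but $F_{\star k_0}$ is \emph{replaced}, not retained; your affine-independence conclusion is nonetheless correct, since $T$ is invertible with $e^{\top}T=e^{\top}$, so right-multiplication by $T$ preserves the affine hull of the column set.
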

\begin{proof}
Let $F\in\sF_K\weg\sF_K^{\text{in}}$ and $Q\in\sQ_K^{\text{an}}$ contains a column $Q_{\star i}$ so that $Q_{k,i}>0$ for all $k\in\set{1,\ldots,K}$. Note that the $i$-th column of $FQ$ is an open convex combination (see \cref{def:openconvexcombination}) of $F_{\star 1},\ldots, F_{\star K}$. By \cref{lem:uniqueconvexcombinationintermsofindependentvectors} in combination with \cref{lem:propertiesopenconvexset} there is a convex combination $c$ of $F_{\star 1},\ldots,F_{\star K}$ different from $Q_{\star  i}$ that results in the $i$-th column of $FQ$. Define $Q^2$ by replacing the $i$-th column of $Q$ by $c$. Then $FQ=FQ^2$ and as the $i$-th column of $Q$ is not equal to one of $e_1,\ldots,e_k$, $Q^2$ is still an element of $\sQ_K^{\text{an}}$. But $(F,Q)\not\sim (F,Q^2)$.

Now consider $F\in\sF_K^{\text{in}}$ and $Q\in\sQ_K$, and assume that there is a column $F_{\star k_0}$ so that $\delta\le F_{s,k_0}\le 1-\delta$ for all $s\in\set{1,\ldots,M}$. After relabelling if necessary, we may assume that $k_0=2$. 
Define the real $K\x K$-matrix $R$ as 
\begin{align*}
    R = \begin{pmatrix}
        R^1 & 0 \\ 0 & I
    \end{pmatrix},
    \intertext{where}
    R^ 1 = & \begin{pmatrix}
        1 & \delta \\ 0 & 1-\delta 
    \end{pmatrix}.
    \intertext{Then $R$ is invertible with inverse}
    R^{-1} = & \begin{pmatrix}
         (R^1)^{-1} & 0 \\ 0 & I 
    \end{pmatrix},
    \intertext{where}
    (R^1)^{-1} = & \begin{pmatrix}
        1 & -\frac{\delta}{1-\delta} \\ 0 & \frac{1}{1-\delta}
    \end{pmatrix}
\end{align*}
and $I$ is the $(K-2)\x (K-2)$ identity matrix. 
Note that $e'RQ=e'Q=e'$. Moreover the entries of $RQ$ are nonnegative. Hence $RQ\in\sQ_K$, and  for all $k\in \set{1,3,\ldots,K}$ and $i\in\set{1,\ldots,M}$, $(RQ)_{ki}\ge Q_{ki}$. So if $Q_{\star i} =e_k$, then $(RQ)_{k i}=1$. If we make use of the fact that the columns of $RQ$ sum to one and are nonnegative, it follows that $(RQ)_{\star i} =e_k$. 
Note that $(RQ)_{2i}=(1-\delta)Q_{2i}\le 1-\delta<1$, hence $e_2$ is \textbf{not} a column of $RQ$.    
 It follows that $\set{k: e_k \text{ is a column in }RQ}=\set{k: e_k \text{ is a column in }Q}\weg\set{2}$.

Note that $(FR^{-1})_{\star k}=F_{\star k}$ for $k\in\set{1,3,\ldots,K}$ and $(FR^{-1})_{2\star } = -\frac\delta{1-\delta} F_{\star1} + \frac1{ 1-\delta}F_{\star2}$, so in case $K=2$, then $(FR^{-1})_{\star 1}-(FR^{-1})_{\star 2}= -\frac1{1-\delta}(F_{\star1}-F_{\star2})$. In case $K>2$, then $(FR^{-1})_{\star k}-(FR^{-1})_{\star K}=F_{\star k}-F_{\star K}$, for $k\in\set{1,3,\ldots,K-1}$ and $(FR^{-1})_{\star 2}-(FR^{-1})_{\star K} = -\frac\delta{1-\delta} F_{\star1} + \frac1{ 1-\delta}F_{\star2}-F_{\star K}=-\frac\delta{1-\delta}( F_{\star1}-F_{\star K}) + \frac1{ 1-\delta}(F_{\star2}-F_{\star K}) $. 
 It follows that as $F_{\star1}-F_{\star K},\ldots,F_{\star K-1}-F_{\star K}$ are linearly independent, also $(FR^{-1})_{\star 1}-(FR^{-1})_{\star K},\ldots,(FR^{-1})_{\star K-1}-(FR^{-1})_{\star K}$ are linearly independent. 
 Note that the first, third, up to the $K$th column of $FR^{-1}$ are identical to $F$. We only need to show that all entries in the second column of $FR^{-1}$ take values in $[0,1]$. 
 Using the assumptions on $F_{\star 2}$ (remember that $k_0=2$), we see that for every $s\in\set{1,\ldots,M}$,
 \begin{align*}
     (FR^{-1})_{s2} = &-\frac{\delta}{1-\delta}F_{s1}+ \frac{1}{1-\delta}F_{s2} .
     \intertext{So}
     (FR^{-1})_{s2} \le &\frac{1}{1-\delta}(1-\delta) =1 
     \intertext{and}
     (FR^{-1})_{s2} \ge & - \frac{\delta}{1-\delta}+ \frac{1}{1-\delta  }\delta =0. 
 \end{align*}
 Hence $FR^{-1}\in \sF_K^{\text{in}}$.  
 Clearly $FQ=(FR^{-1})(RQ)$, but $(F,Q)\not\sim (FR^{-1},RQ)$. 
\end{proof}

\begin{theorem}\label{thm:identifiabilityanchorinF}
 Define $\sM''=\bigcup_{K=1}^{M\wedge N}\sF_K^{\text{an}}\x \sQ_K^{\text{in}}$. 
    Then $\sM''$ is an identifiable model.  
            \end{theorem}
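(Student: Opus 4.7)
The plan is to mirror the proof of \cref{thm:identifiabilityanchorinQ}, but replacing convex hulls of columns by cones of rows. Take $(F^1,Q^1),(F^2,Q^2)\in \sM''$ with $\Pi = F^1Q^1=F^2Q^2$, and let $K_j$ denote the number of columns of $F^j$. Since $F^j$ has nonnegative entries, every row of $\Pi$ is a nonnegative linear combination of the rows of $Q^j$, so $\cone(\Pi)\subseteq \cone(Q^1)\cap\cone(Q^2)$. For the reverse inclusion, I would invoke the anchor condition on $F^j$: for each $k\in\set{1,\ldots,K_j}$ there is an SNP $s$ with $F^j_{s\star}=\delta e_k'$ for some $\delta>0$, whence $\Pi_{s\star}=\delta Q^j_{k\star}$. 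Since cones are closed under positive scaling, $Q^j_{k\star}\in\cone(\Pi)$, and hence $\cone(\Pi)=\cone(Q^1)=\cone(Q^2)$.

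Next, since the rows of $Q^j$ are linearly independent (because $Q^j\in\sQ_{K_j}^{\text{in}}$), they form a minimal set of generators for the common cone, and — by the analogue of \cref{rem:uniquedecompositionbygeneratorsimpliesthatthegeneratorsareextreme} and \cref{cor:atmostoneminimalset} for cones (via the extreme‑ray characterisation in \cref{def:coneandwedge,def:wedgeorconegeneratedbysetA}) — any two such generating sets agree up to permutation and positive rescaling. Concretely, $K_1=K_2=:K$ and there exist a permutation $\pi$ of $\set{1,\ldots,K}$ and scalars $\lambda_k>0$ with $Q^2_{k\star}=\lambda_k Q^1_{\pi(k)\star}$ for all $k$.

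To pin down $\lambda_k=1$ I would exploit the column‑sum constraint $e'Q^1=e'=e'Q^2$. For every individual $i$,
\[
1=\sum_{k=1}^K Q^2_{ki}=\sum_{k=1}^K\lambda_k Q^1_{\pi(k)i}\qquad\text{and}\qquad 1=\sum_{k=1}^K Q^1_{\pi(k)i},
\]
so $\sum_{k=1}^K(\lambda_k-1)Q^1_{\pi(k)\star}=0$ as a vector identity in $\mathbb R^N$. Linear independence of the rows of $Q^1$ forces $\lambda_k=1$ for every $k$, i.e.\ $Q^2_{k\star}=Q^1_{\pi(k)\star}$.

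Finally, writing $P$ for the permutation matrix with $P_{k\ell}=\delta_{\ell,\pi(k)}$ so that $Q^2=PQ^1$, the equality $F^1Q^1=F^2Q^2=F^2PQ^1$ gives $(F^1-F^2P)Q^1=0$. Reading this row by row and again using linear independence of the rows of $Q^1$ in $\mathbb R^N$ yields $F^1=F^2P$, i.e.\ $F^2_{sk}=F^1_{s\pi(k)}$. Together with $Q^2_{ki}=Q^1_{\pi(k)i}$, this is exactly $(F^1,Q^1)\sim(F^2,Q^2)$. The main obstacle I expect is the cone analogue of the ``unique minimal generating set'' statement used in \cref{thm:identifiabilityanchorinQ}; provided the appendix develops extreme rays of wedges/cones in parallel with extreme points of convex sets, the argument then goes through verbatim, with the column‑sum normalisation playing the role that fixes the otherwise free positive scalings.
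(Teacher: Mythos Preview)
Your proposal is correct and follows essentially the same route as the paper: establish $\cone(\Pi)=\cone(Q^1)=\cone(Q^2)$ via the anchor rows of $F^j$, use linear independence of the rows of $Q^j$ to deduce that both row sets are minimal (extreme-ray) generators and hence agree up to permutation and positive scaling, kill the scalings with the column-sum constraint and row independence, and finally recover $F^2_{\star k}=F^1_{\star\pi(k)}$. The only cosmetic difference is your last step, where you argue via $(F^1-F^2P)Q^1=0$ and row independence of $Q^1$ rather than invoking the unique-decomposition lemma for cones; these are the same statement, and the appendix (\cref{lem:coneuniquedecompositionifandonlyifelementsarelinearlyindependent,lem:aconegeneratedbyAwithuniquedecompositionthenAisminimalandconsistofextremepoints,lem:uniquenessofminimalsetthatgeneratescone}) supplies exactly the cone analogues you flagged as the main obstacle.
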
 
\begin{proof}
The reader may familiarise him- or herself with the theory of cones in \cref{sec:cones}. 

For an $a\x b$-matrix $A$, with nonnegative entries, we denote by $\cone(A)=\cone(A_{1\star },\ldots,A_{a \star })$ the cone generated by the \textbf{rows} of $A$ (see \cref{def:wedgeorconegeneratedbysetA}). As $A$ has nonnegative elements, this is indeed a cone and not just a wedge (see \cref{def:coneandwedge}). 

Let $(F^1,Q^1),(F^2,Q^2)\in \sM''$ and suppose that $\Pi=F^1Q^1=F^2Q^2$. Let $K_1$ be the number of columns of $F^1$ and $K_2$ the number of columns of $F^2$.  
Note that each row of $\Pi$ is is an element of $\cone(Q^1)$ and an element of $\cone(Q^2)$. As there are  $\delta_1^1,\ldots,\delta_{K_1}^1,\delta_1^2,\ldots,\delta_{K_2}^2>0$, so that $\delta_1^1 e_1',\ldots,\delta_{K_1}^1e_{K_1}'$ are rows in $F^1$, and $\delta_1^2e_1',\ldots, \delta_{K_2}^2e_{K_2}'$ are rows of  $F^2$, it follows that $\cone(\Pi)= \cone(Q^1)=\cone(Q^2)$. 

As $Q_{1\star},\ldots,Q_{K\star}$ are linearly independent for each $Q\in\sQ_K^{\text{in}}$, it follows from \cref{lem:aconegeneratedbyAwithuniquedecompositionthenAisminimalandconsistofextremepoints,lem:coneuniquedecompositionifandonlyifelementsarelinearlyindependent} that the rows of $Q^1$ are extreme points of $\cone(Q^1)$. Similarly, the rows of $Q^2$ are also extreme points of $\cone(Q^1)$. It follows from \cref{lem:uniquenessofminimalsetthatgeneratescone} $K_1=K_2$ and there  is a permutation $\pi:\set{1,\ldots,K_1}\to \set{1,\ldots,K_1}$, and there are constants $\eps_1,\ldots,\eps_{K_1}>0$ so that $Q_{k\star}^2= \eps_k  Q_{\pi(k)\star}^1$, for every $k\in\set{1,\ldots,K_1}$. Define $\eps$ as $\eps'=(\eps_{\pi^{-1}(1)},\ldots,\eps_{\pi^{-1}(K_1)})$, where $\pi^{-1}$ is the inverse mapping of $\pi$. Then $e'Q^2 = \eps'Q^1=e'Q^1=e'$, and as the rows of $Q^1$ are linearly independent, it follows that $\eps=e$. In particular $Q_{k\star}^2=  Q_{\pi(k)\star}^1$, for every $k\in\set{1,\ldots,K_1}$.

As the rows of $Q^1$ (and $Q^2$) are linearly independent, it follows from \cref{lem:coneuniquedecompositionifandonlyifelementsarelinearlyindependent} that every element in $\cone(Q^1)$ has a unique decomposition in terms of the rows of $Q^1$. So $F^2_{\star k}=F^1_{\star \pi(k)}$ for all $k\in\set{1,\ldots,K_1}$. Hence $(F^1,Q^1)\sim (F^2,Q^2)$ and $\sM''$ is an identifiable model.  
\end{proof}

Like $\sM'$, $\sM''$ cannot be substantially enlarged while maintaining identifiability. 
\begin{theorem}\label{thm:identifiabilityanchorinFcounterexample}
 Let $K\ge 2$ and $M\ge K+1$. 
    Let $F\in \sF_K^{\text{an}}$ be so that there is a $\delta>0$ and a row $i$ so  that $\delta\le F_{i,k}\le 1-\delta$, for all $k\in\set{1,\ldots, K}$, and let $Q\in\sQ_K\weg\sQ_K^{\text{in}}$,  then there is an $F^2\in\sF_K^{\text{an}}$, so that $FQ=F^2Q$, but $(F,Q)\not\sim (F^2,Q)$. 

Let $F\in\sF_K$ and $Q\in\sQ_K^{\text{in}}$ be such that there is a  $0<\delta<1/2$ and a row $k_0$ so that $ Q_{k_0i}\ge \delta$ for all $i\in\set{1,\ldots,N}$.
     Then there are $F^2\in \sF_K\weg\sF_K^{\text{an}}$ and $Q^2\in \sQ_K^{\text{in}}$ so that $FQ=F^2Q^2$, but $(F,Q)\not\sim (F^2,Q^2)$. The matrix $F^2$ can be chosen so that $\set{k:\delta_ke_k'\text{ is a row in }F^2\text{ for some }\delta_k>0}=\set{k:\delta_ke_k'\text{ is a row in }F^1\text{ for some }\delta_k>0}\weg\set {k_0}$. 
\end{theorem}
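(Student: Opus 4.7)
The plan is to handle the two parts of the theorem separately, following the template of \cref{thm:identifiabilityanchorinQcounterexample} but dualising the roles of rows and columns.

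For the first part I would exploit the linear dependence of the rows of $Q$. Since $Q\notin\sQ_K^{\text{in}}$, there is a nonzero $K$-vector $v$ with $v'Q=0$. I would perturb $F$ only in row $i$, setting $F^2:=F+\alpha e_iv'$ for a scalar $\alpha$ with $0<|\alpha|<\delta/\max_k|v_k|$. The equality $F^2Q=FQ+\alpha e_i(v'Q)=FQ$ is automatic, and the $i$-th row of $F^2$ has entries in $[\delta-|\alpha v_k|,\,1-\delta+|\alpha v_k|]\subseteq[0,1]$, so $F^2\in\sF_K$. Because $F_{i,k}\ge\delta>0$ for every $k$ while an anchor row has a single nonzero entry, row $i$ is distinct from all anchor rows of $F$; hence the anchor rows survive unchanged and $F^2\in\sF_K^{\text{an}}$. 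If $(F,Q)\sim(F^2,Q)$ via a permutation $\pi$, then the identity $F^2_{s_k,\ell}=F_{s_k,\pi(\ell)}$ at any anchor row $s_k$ of $F$ forces $\pi(k)=k$ for every $k$, so $F^2=F$; this contradicts $\alpha e_iv'\neq 0$.

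For the second part I want a small invertible $R$ acting on the rows of $Q$ such that $F^2:=FR^{-1}\in\sF_K$ loses the anchor for $k_0$ and $Q^2:=RQ\in\sQ_K^{\text{in}}$. The design idea is to arrange that the unique modified column of $F^2$ is a \emph{convex} combination of two existing columns of $F$, which guarantees $[0,1]$-entries on the $F$-side irrespective of $F$. Concretely, fix any $k_1\in\set{1,\ldots,K}\weg\set{k_0}$ (possible since $K\ge 2$) and set $R:=I+\delta(e_{k_1}-e_{k_0})e_{k_1}'$. A direct computation yields $e'R=e'$ and $R^{-1}=I-\frac{\delta}{1+\delta}(e_{k_1}-e_{k_0})e_{k_1}'$, so $(FR^{-1})_{\star\ell}=F_{\star\ell}$ for $\ell\neq k_1$ and $(FR^{-1})_{\star k_1}=\frac{\delta}{1+\delta}F_{\star k_0}+\frac{1}{1+\delta}F_{\star k_1}$. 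Only two rows of $RQ$ change, namely $(RQ)_{k_0,i}=Q_{k_0,i}-\delta Q_{k_1,i}$ and $(RQ)_{k_1,i}=(1+\delta)Q_{k_1,i}$; combining $Q_{k_1,i}\le 1-Q_{k_0,i}\le 1-\delta$ with $\delta<1/2$ one checks the former lies in $[\delta^2,1]$ and the latter in $[0,1-\delta^2]$, so $RQ\in\sQ_K$.

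I would then check the anchor bookkeeping. For $k\notin\set{k_0,k_1}$ an anchor row for $k$ in $F$ has $F_{s,k_0}=F_{s,k_1}=0$ and remains an anchor row for $k$ in $F^2$; an anchor row for $k_1$ in $F$ has $F_{s,k_0}=0$, so $(FR^{-1})_{s,k_1}=F_{s,k_1}/(1+\delta)>0$ and it stays an anchor for $k_1$; and a prospective anchor row for $k_0$ in $F^2$ would force $F_{s,k_0}>0$ together with $(FR^{-1})_{s,k_1}=(\delta F_{s,k_0}+F_{s,k_1})/(1+\delta)=0$, which is impossible because $\delta F_{s,k_0}>0$. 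Thus the anchor set of $F^2$ is precisely that of $F$ with $k_0$ removed, and in particular $F^2\notin\sF_K^{\text{an}}$. The rows of $Q^2=RQ$ are linearly independent because $R$ is invertible, so $Q^2\in\sQ_K^{\text{in}}$. To rule out $(F,Q)\sim(F^2,Q^2)$, any such equivalence produces a permutation matrix $P$ with $RQ=PQ$, so $(R-P)Q=0$; linear independence of the rows of $Q$ forces $R=P$, which fails since $R_{k_1,k_1}=1+\delta>1$.

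The main obstacle is Part 2: we have no structural information on $F$ beyond $F\in\sF_K$, yet a single $R$ must simultaneously send $FR^{-1}$ into $\sF_K$ and $RQ$ into $\sQ_K$, erase the $k_0$-anchor, and create no new one. The device that resolves both sides at once is to force the unique modified column of $FR^{-1}$ to be a convex combination of two existing columns of $F$ (handling the $F$-side for free), paying on the $Q$-side with the slack $\delta$ handed over by the hypothesis $Q_{k_0,i}\ge\delta$.
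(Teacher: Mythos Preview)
Your proposal is correct and follows essentially the same approach as the paper. Part~1 is identical to the paper's argument (perturb row $i$ of $F$ by a small multiple of a null vector of $Q$), with the welcome addition of an explicit argument that the anchor rows force any equivalence permutation to be the identity. In Part~2 you use a slightly different matrix $R$ than the paper---the paper takes $R=\begin{pmatrix}1-\delta&0\\\delta&1\end{pmatrix}\oplus I$ and sets $F^2=FR$, $Q^2=R^{-1}Q$, while you take $R=I+\delta(e_{k_1}-e_{k_0})e_{k_1}'$ and set $F^2=FR^{-1}$, $Q^2=RQ$---but the structure is the same: a rank-one perturbation of the identity that makes the sole modified column of $F^2$ a convex combination of two columns of $F$ (so $F^2\in\sF_K$ for free), while the lower bound $Q_{k_0,i}\ge\delta$ absorbs the negative term on the $Q$-side. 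Your explicit verification that $(F,Q)\not\sim(F^2,Q^2)$ via $(R-P)Q=0\Rightarrow R=P$ is again more detailed than what the paper writes.
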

\begin{proof}
    Let $Q\in\sQ_K\weg\sQ_K^{\text{in}}$, so the rows of $Q$ are not 
independent and let 
$F\in\sF_K^{\text{an}}$ so that  for some row $i$ and some  $
\delta>0$, $\delta \le F_{i,k}\le 1-\delta$ 
for all $k\in\set{1,\ldots,K}$. As $Q$ is not 
independent, there is a nonzero vector 
$v\in\re^K$ so that $v'Q=0$. For small 
enough $\alpha>0$, we have that $-\delta 
\le \alpha v_k \le \delta$, for all $k\in\set{1,\ldots,K}$. Let $w'= 
\alpha v'+F_{i\star }$.  So $0\le w_k\le 1$, for 
all $k\in\set{1,\ldots,K}$. Define $F^2$ by 
replacing the $i$-th row by $w'$. As the $i$-th row of $F$ is not equal to $\delta e_k'$ for all $\delta>0$ and $k\in\set{1,\ldots,K}$, we have that $F^2\in \sF_K^{\text{an}}$ and 
$FQ=F^2Q$. But $(F,Q)\not\sim (F^2,Q)$. 

Let $F\in\sF_K$ and $Q\in\sQ_K^{\text{in}}$ be such that there is a  $0<\delta<1/2$ and a row $k_0$ so that $\delta\le Q_{k_0i}\le 1-\delta$ for all $i\in\set{1,\ldots,N}$. After a permutation of the columns of $F$ and the rows of $Q$, if necessary, we may assume that $k_0=2$. 
Define the real $K\x K$-matrix $R$ as follows:
\begin{align*}
    R = &\begin{pmatrix}
        R^1 & 0 \\ 0 & I
    \end{pmatrix},
    \intertext{where}
    R^1 =& \begin{pmatrix}1-\delta & 0 \\ \delta & 1 \end{pmatrix}.
    \intertext{Then $R$ is invertible, with inverse }
    R^{-1} =& \begin{pmatrix}
        (R^1)^{-1} & 0 \\ 0 & I 
    \end{pmatrix},
    \intertext{where}
    (R^1)^{-1} = & \begin{pmatrix}
        \frac1{1-\delta} & 0 \\ \frac{-\delta}{1-\delta} & 1 
    \end{pmatrix}.
\end{align*}
Note that columns $k=2,\ldots,K$ of $FR$ are identical to those of $F$, and the first column of $FR$ is a convex combination of the first two columns of $F$. It follows that $\set{k:\delta_ke_k'\text{ is a row in }F^2\text{ for some }\delta_k>0}=\set{k:\delta_ke_k'\text{ is a row in }F^1\text{ for some }\delta_k>0}\weg\set {2}$.

For $k=3,\ldots, K$, the $k$-th row of $R^{-1}Q$ and $Q$ are identical. Note that $e'R^{-1}Q=e'Q=e'$. So the columns of $R^{-1}Q$ still sum to one. As $R$ is invertible, the rows of $R^{-1}Q$ are also independent. It is only left to show that the entries of the first two rows of $R^{-1}Q$ stay non-negative. This is clear for the first row. 
All entries in the second row of $Q$  are at least $\delta$ (remember that $k_0=2$), all entries in the first row of $Q$  are at most $1-\delta$, as the columns are non-negative and sum to one. Using this, we have for the  second row,
\begin{align*}
        (R^{-1}Q)_{2i} = & -\frac{\delta }{1-\delta}Q_{1i} + Q_{2i}.
        \intertext{So} 
        (R^{-1}Q)_{2i} \le & Q_{2i} \le 1,
        \intertext{and}
        (R^{-1}Q)_{2i} \ge & -\frac{\delta}{1-\delta}(1-\delta) + \delta=0.  
\end{align*}
It follows that $R^{-1}Q\in\sQ_K^{\text{in}}$. Clearly $FQ=(FR)(R^{-1}Q)$, but $(F,Q)\not\sim (FR,R^{-1}Q)$. 
\end{proof}

\section{Necessary conditions}\label{sec:discussion}

So, we  found two different models $\sM'$ and $\sM''$ ($\sM'$ is not contained in $\sM''$, nor vice versa), which each provide identifiability, and both cannot be substantially enlarged without violating the identifiability property. It is remarkable, that $\sM'$ has an independence requirement on the matrices $F$ and an anchor requirement on $Q$, while $\sM''$ has an anchor requirement on $F$ and an independence requirement on $Q$. Note furthermore, that $\sF_K^{\text{an}}\subseteq \sF_K^{\text{in}}$ and $\sQ_K^{\text{an}}\subseteq \sQ_K^{\text{in}}$, so provable identifiability is only maintainable when enlarging $\sF_K^{\text{an}}$ is paired with shrinking $\sQ_K^{\text{in}}$, and vice versa.  

The first part of  \cref{thm:identifiabilityanchorinQcounterexample} shows that for any  set $\sF_K'$ strictly larger than $\sF_K^{\text{in}}$, $\set{(F,Q):F\in\sF_K',Q\in \sQ_K^{\text{an}}}$ is not identifiable anymore. 
Similarly, the first part of theorem \cref{thm:identifiabilityanchorinFcounterexample} shows that for any set $\sQ_K'$ strictly larger than $\sQ_K^{\text{in}}$, $\set{(F,Q):F\in\sF_K^{\text{an}},Q\in\sQ_K'}$ is not identifiable anymore. 
At the same time, the second part of \cref{thm:identifiabilityanchorinQcounterexample} shows that there is not much space to enlarge $\sQ_K^{\text{an}}$ in $\sM'$, while maintaining identifiability,   
and similar for $\sM''$ there is not much space to enlarge $\sF_K^{\text{an}}$. 

However, precise practical necessary conditions are still lacking. We will discuss abstract necessary and sufficient conditions in the next subsection. Hopefully they give a direction for future research. 

\subsection{Discussion of necessary conditions}

Let $K(F)$ denote the number of columns of a matrix $F$. Recall that for a real  $n\x m$-matrix $A$, $\co(A)=\co(\set{A_{\star1},\ldots, A_{\star m}})$ and $\cone(A)=\cone(\set{A_{1\star},\ldots,A_{n\star}})$.
We continue with a discussion of necessary conditions for identifiability. Now suppose $\sM$ is an identifiable model. Then for $(F^1,Q^1),(F^2,Q^2)\in \sM$ satisfying $F^1Q^1=F^2Q^2$, we have that the columns of $F^1$ and $F^2$ are equal up to a permutation, and the rows of $Q^1$ and $Q^2$ are also equal up to a permutation. In particular $\co(F^1)=\co(F^2)$ and $\cone(Q^1)=\cone(Q^2)$.  So 
\begin{theorem}\label{thm:identfiabilitynecessaryconditions}
    Let $\sM$ be an identifiable model, then $\co(F^1)=\co(F^2)$ and $\cone(Q^1)=\cone(Q^2)$.
\end{theorem}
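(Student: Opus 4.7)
The plan is to deduce the statement directly from the definition of identifiability applied to two decompositions of the same product matrix, together with the fact that convex hulls and cones depend only on the set (not the ordering) of their generators. The hypothesis implicit in the theorem, as phrased in the preceding discussion, is that $(F^1,Q^1),(F^2,Q^2)\in\sM$ satisfy $F^1Q^1=F^2Q^2$.

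First I would take arbitrary $(F^1,Q^1),(F^2,Q^2)\in\sM$ with $F^1Q^1=F^2Q^2$. By the definition of an identifiable model, this forces $(F^1,Q^1)\sim(F^2,Q^2)$. Unpacking the equivalence relation, $F^1$ and $F^2$ have the same number of columns $K$ (and hence $Q^1,Q^2$ have the same number of rows $K$), and there exists a permutation $\pi:\set{1,\ldots,K}\to\set{1,\ldots,K}$ with $F^2_{\star k}=F^1_{\star \pi(k)}$ and $Q^2_{k\star}=Q^1_{\pi(k)\star}$ for every $k\in\set{1,\ldots,K}$.

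Next I would invoke the definitions of $\co$ and $\cone$ from the notation paragraph: $\co(F^i)=\co(\set{F^i_{\star 1},\ldots,F^i_{\star K}})$ and $\cone(Q^i)=\cone(\set{Q^i_{1\star},\ldots,Q^i_{K\star}})$. Since $\pi$ is a bijection, the \emph{sets} $\set{F^1_{\star 1},\ldots,F^1_{\star K}}$ and $\set{F^2_{\star 1},\ldots,F^2_{\star K}}$ coincide, and similarly for the row-sets of $Q^1$ and $Q^2$. Because the convex hull and the generated cone depend only on the underlying set of generators, this immediately yields $\co(F^1)=\co(F^2)$ and $\cone(Q^1)=\cone(Q^2)$.

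There is no genuine obstacle: the theorem is a formal corollary of the definitions, and the proof is essentially the observation already spelled out in the paragraph preceding the statement. The substantive content of the result is conceptual rather than technical, since it isolates the invariants $\co(F)$ and $\cone(Q)$ that any identifiable model must respect, thereby providing the abstract target that the sufficient conditions in \cref{thm:identifiabilityanchorinQ,thm:identifiabilityanchorinF} refine (via extreme points of $\co(F)$ and extreme rays of $\cone(Q)$).
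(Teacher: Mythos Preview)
Your proof is correct and mirrors the paper's own argument exactly: the paper simply observes in the paragraph preceding the theorem that identifiability forces the columns of $F^1,F^2$ (and the rows of $Q^1,Q^2$) to agree up to permutation, whence $\co(F^1)=\co(F^2)$ and $\cone(Q^1)=\cone(Q^2)$.
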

In the following theorem, we establish necessary and sufficient conditions for identifiability. 
\begin{theorem}
    Let $\sM$ be a model that satisfies the following properties: 
    \begin{enumerate}[(1)]
    \item $K(F)<N$ for all $(F,Q)\in \sM$, 
    \item For every $(F,Q)\in \sM$, $\set F\x \sQ_{K(F)}^{\text{an}}\subseteq \sM$.     
    \end{enumerate}
    Then $\sM$ is identifiable if and only if 
    \begin{enumerate}[(a)]
        \item for all $(F^1,Q^1),(F^2,Q^2)\in \sM$, $F^1Q^1=F^2Q^2$ implies that $\co(F^1)=\co(F^2)$, 
        \item for every $(F,Q)\in \sM$, $F_{\star 1}-F_{\star K},\ldots,F_{\star K-1}-F_{\star K}$ are linearly independent. 
    \end{enumerate}
\end{theorem}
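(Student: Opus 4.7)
The plan is to prove each direction separately. Part (a) of the forward direction is immediate from \cref{thm:identfiabilitynecessaryconditions}: if $\sM$ is identifiable and $(F^1,Q^1),(F^2,Q^2)\in\sM$ satisfy $F^1Q^1=F^2Q^2$, then $\co(F^1)=\co(F^2)$. The real work is to show that identifiability forces (b), and here I would exploit the closure assumption (2) to manufacture a counterexample whenever (b) fails. For the reverse direction, the argument closely parallels the proof of \cref{thm:identifiabilityanchorinQ}, with (a) playing the role previously supplied by the anchor condition on $Q$.

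Suppose, for contradiction, that $\sM$ is identifiable but (b) fails, so some $(F,Q)\in\sM$ has $K:=K(F)\ge 2$ and column differences $F_{\star 1}-F_{\star K},\ldots,F_{\star K-1}-F_{\star K}$ that are linearly dependent, i.e.\ $F\in\sF_K\weg\sF_K^{\text{in}}$. By (1), $N\ge K+1$, so I can construct a $\tilde Q\in\sQ_K^{\text{an}}$ whose first $K$ columns are $e_1,\ldots,e_K$ and whose $(K+1)$-th column is the strictly positive probability vector $(1/K,\ldots,1/K)'$ (remaining columns can be filled with, say, $e_1$). By (2), $(F,\tilde Q)\in \sM$, and \cref{thm:identifiabilityanchorinQcounterexample} applies to yield some $Q^2\in\sQ_K^{\text{an}}$ with $F\tilde Q=FQ^2$ but $(F,\tilde Q)\not\sim (F,Q^2)$. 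Invoking (2) again gives $(F,Q^2)\in\sM$, contradicting identifiability. So (b) must hold.

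For the converse, let $(F^1,Q^1),(F^2,Q^2)\in\sM$ satisfy $F^1Q^1=F^2Q^2$. By (a), $\co(F^1)=\co(F^2)=:C$. By (b) and \cref{rem:uniquedecompositionbygeneratorsimpliesthatthegeneratorsareextreme}, the columns of $F^1$ and of $F^2$ are extreme points of $C$ generating it, so by \cref{cor:atmostoneminimalset} the two generating sets coincide. Hence $K_1=K_2$ and there is a permutation $\pi$ with $F^2_{\star k}=F^1_{\star \pi(k)}$. Finally, (b) combined with \cref{lem:uniqueconvexcombinationintermsofindependentvectors} implies that each element of $C$ has a unique convex decomposition in terms of the columns of $F^1$, so matching decompositions column by column in $\Pi$ forces $Q^2_{k\star}=Q^1_{\pi(k)\star}$, giving $(F^1,Q^1)\sim (F^2,Q^2)$. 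The main obstacle lies in the forward direction: the proof of necessity of (b) relies crucially on the freedom granted by (2) to pair the given $F$ with any anchor matrix, and on (1) to guarantee room for an extra, fully positive column, without which \cref{thm:identifiabilityanchorinQcounterexample} could not be invoked to expose the failure of identifiability.
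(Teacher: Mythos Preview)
Your proof is correct and follows essentially the same approach as the paper. The reverse direction matches verbatim; for the necessity of (b), the paper constructs the two anchor matrices $Q^p=(p,I_K,e_1,\ldots,e_1)$ and $Q^q=(q,I_K,e_1,\ldots,e_1)$ directly from two distinct probability vectors $p\neq q$ supplied by \cref{lem:uniqueconvexcombinationintermsofindependentvectors}, whereas you achieve the same contradiction by invoking \cref{thm:identifiabilityanchorinQcounterexample} on a $\tilde Q$ with a strictly positive column---but that theorem's proof unwinds to the same lemma, so the two routes are equivalent.
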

\begin{proof}
First suppose that  $\sM$ is  identifiable. According to \cref{thm:identfiabilitynecessaryconditions}, (a) holds. Let $(F,Q)\in\sM$. 
Suppose that $F_{\star1}-F_{\star K},\ldots,F_{\star K-1}-F_{\star K}$ are not linearly independent. Then by \cref{lem:uniqueconvexcombinationintermsofindependentvectors} there are two probability vectors $p,q$, $p\neq q$ so that $p_1F_{\star 1}+ \ldots+p_KF_{\star K}=q_1F_{\star 1}+ \ldots+q_KF_{\star K}$.
Define the $K(F)\x N$ matrices $Q^p$ and $Q^q$ as follows: 
\[
Q^p = (p, I_{K(F)}, e_1,\ldots,e_1), \quad Q^q= (q, I_{K(F)}, e_1,\ldots,e_1),
\]
where $I_{K(F)}$ is the $K(F)\x K(F)$ identity matrix, $Q^p$ and $Q^q$ have $N-K(F)-1\ge 0$ columns $e_1$ in the right side of the matrix. By property (1) $Q^p,Q^q\in \sQ_{K(F)}^{\text{an}}$, and by property (2), $(F,Q^p),(F,Q^q)\in \sM$ and $FQ^p=FQ^q$. But $(F,Q^p)\not \sim (F,Q^q)$. Contradiction. So for every $(F,Q)\in\sM$, $F_{\star1}-F_{\star K},\ldots, F_{\star K-1}-F_{\star K}$ are linearly independent. 

Now assume $\sM$ satisfies (a) and (b). Let $(F^1,Q^1),(F^2,Q^2)\in \sM$ be such that $\Pi= F^1Q^1=F^2Q^2$. It follows from (a) that $\co(F^1)=\co(F^2)$. Let $K_1$ the number of columns of $F^1$ and $K_2$ the number of columns of $F^2$.  It follows from \cref{rem:uniquedecompositionbygeneratorsimpliesthatthegeneratorsareextreme} and property (b) that $\set{F^1_{\star 1},\ldots,F^1_{\star K_1}}$  and  $\set{F^2_{\star 1},\ldots,F^2_{\star K_2}}$ are minimal sets. It follows from \cref{cor:atmostoneminimalset} that the sets are equal. In particular, $K_2=K_1$ and there is a permutation $\pi:\set{1,\ldots,K_1}\to \set{1,\ldots, K_1}$ so that $F_{\star k}^2 = F^1_{\star \pi(k)}$. It follows from property (b) and \cref{lem:uniqueconvexcombinationintermsofindependentvectors} that each  column of $\Pi$ has a unique decomposition in terms of the columns of $F^1$ (or of $F^2$). So $Q^2_{k\star}=Q^1_{\pi(k)\star}$, for all $k\in\set{1,\ldots,K}$. So $(F^1,Q^1)\sim (F^2,Q^2)$. So $\sM$ is identifiable.  
\end{proof}
  
 In \cref{thm:identifiabilityanchorinQ}, where all elements $\sF_K^{\text{in}}$ are allowed,
 I use the anchor condition on 
 $Q$, to guarantee that $\co(F^1)=\co(F^2)$, whenever 
 $F^1Q^1=F^2Q^2$. One could 
 imagine other conditions on $Q$ 
 or $F$ that lead to $\co(F^1)=\co(F^2)$ whenever 
 $F^1Q^1=F^2Q^2$. But it might 
 as well be possible that that the 
 anchor condition is necessary. 
\Cref{thm:identifiabilityanchorinQcounterexample} at least 
shows that one cannot deviate much from the anchor condition.

Instead of using the theory of convex spaces, one can take the approach of cones. This leads to a similar result: 
\begin{theorem}
    Let $\sM$ be a model so that 
    \begin{enumerate}
        \item $K(F)<M$ for all $(F,Q)\in \sM$,
        \item for every $(F,Q)\in\sM$, $\sF_{K(F)}^{\text{an}}\x \set Q\subseteq \sM$. 
    \end{enumerate}
     Then $\sM$ is identifiable if and only if 
    \begin{enumerate}[(a)]
    \item for all $(F^1,Q^1),(F^2,Q^2)\in \sM$ so that $F^1Q^1=F^2Q^2$, we have that $\cone(Q^1)=\cone(Q^2)$. 
    \item For every $(F,Q)\in\sM$, the rows of $Q$ are linearly independent.     
    \end{enumerate}
\end{theorem}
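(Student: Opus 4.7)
The plan is to mirror the proof of the preceding convex-hull theorem, substituting the cone machinery used in \cref{thm:identifiabilityanchorinF} for the convex-hull machinery. For necessity, (a) is immediate from \cref{thm:identfiabilitynecessaryconditions}. For (b) I would argue by contradiction: suppose some $(F,Q)\in\sM$ has linearly dependent rows of $Q$, so there is a nonzero $v\in\re^{K(F)}$ with $v'Q=0$. The goal is to produce two distinct $F^1,F^2\in \sF_{K(F)}^{\text{an}}$ with $F^1Q=F^2Q$. Since $K(F)<M$, I can take $F^1$ whose first $K(F)$ rows are $e_1',\ldots,e_{K(F)}'$ (so the anchor condition is satisfied) and whose $(K(F)+1)$-th row is $(1/2,\ldots,1/2)$; for small enough $\alpha>0$, adding $\alpha v'$ to this single non-anchor row gives $F^2\in\sF_{K(F)}^{\text{an}}$ (the anchor rows are untouched and the perturbed entries remain in $[0,1]$). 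Property (2) puts both $(F^1,Q)$ and $(F^2,Q)$ in $\sM$, and $v'Q=0$ ensures $F^1Q=F^2Q$. Because the unaltered anchor rows $e_1',\ldots,e_{K(F)}'$ force any column permutation matching $F^1$ to $F^2$ to be the identity, while $F^1\neq F^2$, we obtain $(F^1,Q)\not\sim (F^2,Q)$, contradicting identifiability.

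For sufficiency, I would assume (a) and (b) and take $(F^1,Q^1),(F^2,Q^2)\in\sM$ with $\Pi=F^1Q^1=F^2Q^2$; let $K_1=K(F^1)$ and $K_2=K(F^2)$. Condition (a) gives the common cone $C=\cone(Q^1)=\cone(Q^2)$. By (b), the rows of each $Q^i$ are linearly independent, so \cref{lem:aconegeneratedbyAwithuniquedecompositionthenAisminimalandconsistofextremepoints,lem:coneuniquedecompositionifandonlyifelementsarelinearlyindependent} imply that they form minimal generators of $C$ consisting of extreme rays. \cref{lem:uniquenessofminimalsetthatgeneratescone} then yields $K_1=K_2$, a permutation $\pi:\set{1,\ldots,K_1}\to\set{1,\ldots,K_1}$, and positive scalars $\eps_1,\ldots,\eps_{K_1}$ with $Q^2_{k\star}=\eps_k Q^1_{\pi(k)\star}$. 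Using $e'Q^1=e'=e'Q^2$ and the linear independence of the rows of $Q^1$, the $\eps_k$'s are forced to equal $1$, so $Q^2_{k\star}=Q^1_{\pi(k)\star}$. Finally, \cref{lem:coneuniquedecompositionifandonlyifelementsarelinearlyindependent} gives a unique decomposition of each row of $\Pi$ in terms of the rows of $Q^1$, which forces $F^2_{\star k}=F^1_{\star\pi(k)}$, hence $(F^1,Q^1)\sim(F^2,Q^2)$.

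The main obstacle is the construction in the necessity of (b): the perturbed $F^2$ must remain in $\sF_{K(F)}^{\text{an}}$ and its entries must stay in $[0,1]$, while also being inequivalent to $F^1$. The hypothesis $K(F)<M$ is precisely what supplies a free non-anchor row to absorb the perturbation by $\alpha v'$, and the rigidity enforced by the shared standard-basis anchor rows is what prevents the two matrices from being related by any column permutation. The sufficiency direction is essentially a direct transcription of the cone-based argument from \cref{thm:identifiabilityanchorinF}, with the anchor condition on $F$ replaced by the abstract equality $\cone(Q^1)=\cone(Q^2)$ supplied by (a).
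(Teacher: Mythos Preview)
Your proposal is correct and takes essentially the same approach as the paper: the same contradiction construction for (b) using a scaled null-vector perturbation of a constant $(1/2,\ldots,1/2)$ row in an anchor matrix, and the same cone-based argument (via \cref{lem:coneuniquedecompositionifandonlyifelementsarelinearlyindependent,lem:aconegeneratedbyAwithuniquedecompositionthenAisminimalandconsistofextremepoints,lem:uniquenessofminimalsetthatgeneratescone}) for sufficiency. The only cosmetic differences are the ordering of the rows in $F^1,F^2$ and that you should also specify the remaining $M-K(F)-1$ rows (e.g.\ as $e_1'$, as the paper does); your identity-block argument for $(F^1,Q)\not\sim(F^2,Q)$ is valid, though the constant row already forces any column permutation to send the perturbed row to the unperturbed one, which is impossible since $v\neq0$.
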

\begin{proof}
Let $\sM$ be an identifiable model. It follows from  \cref{thm:identfiabilitynecessaryconditions} that (a) holds.
Let $(F,Q)\in \sM$ and let $K$ be the number of columns of $F$. Suppose that the rows of $Q$ are not linearly independent. So there is a nonzero vector $v\in\re^K$ so that $v'Q=0$. For some $\delta>0$, $\max_i |\delta v_i|<1/2$. Define 
\begin{align*}
    F^1 =\begin{pmatrix}
        e'/2\\ I_K \\ e_1'\\\vdots \\ e_1' 
    \end{pmatrix}, \quad F^2 =  \begin{pmatrix}
        e'/2+\delta v'\\ I_K \\ e_1'\\\vdots \\ e_1' 
    \end{pmatrix},
\end{align*}
where the last $M-K-1\ge 0$ rows are $e_1'$ and $I_K$ is the $K\x K$ identity matrix.  
Then by property (1) $F^1,F^2 \in \sF_{K(F)}^{\text{an}}$, and by property (2) $(F^1,Q),(F^2,Q)\in \sM$. We have that $F^1Q=F^2Q$, but $(F^1,Q)\not\sim (F^2,Q)$. Contradiction. So the rows of $Q$ are linearly independent. 

Now let $\sM$ be a model that satisfies (a) and (b). I'll show that $\sM$ is identifiable. Let $(F^1,Q^1),(F^2,Q^2)\in \sM$ be so that $F^1Q^1=F^2Q^2$. Let $K_1$ be the number of columns of $F^1$ and $K_2$ be the number of columns of $F^2$.  It follows from \cref{lem:coneuniquedecompositionifandonlyifelementsarelinearlyindependent} in combination with \cref{lem:aconegeneratedbyAwithuniquedecompositionthenAisminimalandconsistofextremepoints} that $\set{Q_{\star 1}^1,\ldots,Q_{\star K_1}^1}$ and $\set{Q_{\star 1}^2,\ldots,Q_{\star K_2}^2}$ are two minimal sets of $\cone(Q^1)=\cone(Q^2)$. It follows from \cref{lem:uniquenessofminimalsetthatgeneratescone} that $K_2=K_1$ and there are $\delta_1,\ldots,\delta_{K_1}>0$ and a permutation $\pi:\set{1,\ldots,K_1}\to \set{1,\ldots,K_1}$ so that $Q_{k\star }^2 = \delta_k Q_{ \pi(k)\star}^1$, for all $k\in\set{1,\ldots,K_1}$. Define $\delta$ by $\delta'=(\delta_{\pi^{-1}(1)},\ldots,\delta_{\pi^{-1}(K_1)})$, where $\pi^{-1}$ denotes the inverse map of $\pi$. Then $\delta'Q^1=e'Q^2=e'Q^1=e'$. As the rows of $Q^1$ are linearly independent, it follows that $\delta=e$. So $Q_{\star k}^2= Q_{\star \pi(k)}^1$ for all $k\in\set{1,\ldots,K_1}$. According to \cref{lem:coneuniquedecompositionifandonlyifelementsarelinearlyindependent}, as the rows of $Q^1$ are linearly independent, each element of $\cone(Q^1)$ has a unique decomposition in terms of the rows of $Q^1$. It follows that $F^2_{\star k}= F^1_{\star \pi(k)}$. In particular $(F^1,Q^1)\sim (F^2,Q^2)$. So $\sM$ is identifiable. 
\end{proof}

 In \cref{thm:identifiabilityanchorinF} uses the anchor condition on $F$ to guarantee that $\cone(Q^1)=\cone(Q^2)$ whenever $F^1Q^1=F^2Q^2$, for $(F^1,Q^1),(F^2,Q^2)$ in the model. Also, here, it is not clear to me whether the anchor condition is essential or if, under some weaker conditions, we still have $\cone(Q^1)=\cone(Q^2)$ whenever $F^1Q^1=F^2Q^2$. But also here we see in \cref{thm:identifiabilityanchorinFcounterexample} that we cannot deviate much from the anchor condition. 

While the independence requirements seem not a significant restriction, the anchor requirements seem pretty restrictive. It would be nice to either find a model that doesn't require it or prove that it is necessary, one way or another.

\section{Unadmixted}

This section considers the non-admixted case, so each individual inherits their genome only from one ancestor. In mathematical terms, $Q_{ik}\in \set{0,1}$. I find sufficient conditions that are also necessary for identifiability in this particular case. 

\begin{theorem}
    Let $\sF_K^{\text{d}}\subseteq \sF_K$ be all $F\in\sF_K$ so that the columns of $F$ are mutually different. Let $\sQ^{\text{ua}}_K\subseteq \sQ_K$ be the set of all matrices $Q\in\sQ$ so that each $i\in\set{1,\ldots,N}$ there is a $k\in\set{1,\ldots,K}$ so that $Q_{ik}=1$ and $Q_{i\ell}=0$ for all $\ell\neq k$. Moreover, for each $k$, there is a column $i$ in $Q$ so that $Q_{\star i}=e_k$. Then $\sM'''=\bigcup_{K=1}^\infty \sF_K^{\text d}\x \sQ_K^{\text{ua}}=\bigcup_{K=1}^N \sF_K^{\text d}\x \sQ_K^{\text{ua}}$ is identifiable.
\end{theorem}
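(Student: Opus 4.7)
The plan is to exploit the fact that in the unadmixed case every column of $\Pi=FQ$ is literally one of the columns of $F$, so identifying $F$ reduces to reading off the distinct columns of $\Pi$. Before starting I would dispose of the equality $\bigcup_{K=1}^\infty \sF_K^{\text d}\x \sQ_K^{\text{ua}}=\bigcup_{K=1}^N \sF_K^{\text d}\x \sQ_K^{\text{ua}}$ with a one-line observation: $\sQ_K^{\text{ua}}$ is empty for $K>N$, since the hypothesis that every $e_k$ appears as a column requires $K$ distinct columns in a matrix with only $N<K$ columns.

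For identifiability itself, I would take $(F^1,Q^1),(F^2,Q^2)\in\sM'''$ with $\Pi=F^1Q^1=F^2Q^2$, letting $K_j$ denote the number of columns of $F^j$. Because each column of $Q^j$ is some standard basis vector $e_k$, the $i$-th column of $\Pi$ equals the corresponding column of $F^j$; in particular every column of $\Pi$ is a column of $F^j$. Conversely, since every $e_k$ occurs as a column of $Q^j$, every column of $F^j$ occurs as a column of $\Pi$. Thus the column-set of $\Pi$ coincides with the column-set of $F^1$ and with the column-set of $F^2$. Combining this set equality with the distinctness hypothesis on the columns of $F^1$ and $F^2$ forces $K_1=K_2=:K$ and produces a permutation $\pi$ of $\set{1,\ldots,K}$ with $F^2_{\star k}=F^1_{\star \pi(k)}$.

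To finish, I would fix an individual $i$, write $Q^2_{\star i}=e_k$, and note that $\Pi_{\star i}=F^2_{\star k}=F^1_{\star \pi(k)}$; distinctness of the columns of $F^1$ forces $Q^1_{\star i}=e_{\pi(k)}$, so $Q^2_{ki}=Q^1_{\pi(k) i}$, which is exactly the required equivalence $(F^1,Q^1)\sim(F^2,Q^2)$. There is no real obstacle here: the whole argument is driven by the observation that in the unadmixed setting each $\Pi_{\star i}$ equals a literal column of $F$, and distinctness removes any ambiguity. The only point requiring a bit of care is that the permutation obtained from matching the column sets of $F^1$ and $F^2$ is the same permutation that relates the rows of $Q^1$ and $Q^2$; this is automatic, precisely because the distinct columns of $F^j$ uniquely determine from $\Pi_{\star i}$ which $e_k$ is the $i$-th column of $Q^j$.
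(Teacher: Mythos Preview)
Your proof is correct and follows essentially the same route as the paper: both arguments observe that the set of columns of $\Pi$ coincides with the set of columns of each $F^j$, use distinctness to deduce $K_1=K_2$ and obtain a permutation $\pi$ with $F^2_{\star k}=F^1_{\star\pi(k)}$, and then use distinctness again to pin down each column of $Q^j$ from the corresponding column of $\Pi$. If anything, your write-up is slightly cleaner in tracking which of $Q^1$ or $Q^2$ carries the index $e_k$ versus $e_{\pi(k)}$.
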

\begin{proof}
As $\sQ_K^{\text{ua}}=\leeg$ when $K>N$, it follows that $\bigcup_{K=1}^\infty \sF_K^{\text d}\x \sQ_K^{\text{ua}}=\bigcup_{K=1}^N \sF_K^{\text d}\x \sQ_K^{\text{ua}}$.
Let $(F^1,Q^1),(F^2,Q^2)\in\sM'''$ be such that $F^1Q^1=F^2Q^2$. Let $K_1$ be the number of columns of $F_1$ and $K_2$ the number of columns of $F_2$. Note that by the property that each column of $Q^1$ is either $e_1,\ldots,e_{K_1-1}$ or $e_{K_1}$, and that for each $k\in\set{1,\ldots,K_1}$, $e_{k}$ is a column in $Q^1$ (and similar for $Q^2$), that the columns of $\Pi$ are columns of $F^1$ and of $F^2$. It follows that \[
    \set{F_{\star k}^1:k\in\set{1,\ldots,K_1}}= \set{F_{\star k}^2:k\in\set{1,\ldots,K_2}}= \set{\Pi_{\star i}^1:i\in\set{1,\ldots,N}}.  
    \]
    As both $F^1$ and $F^2$ have no identical columns, it follows that $K_2=K_1$ and the set above has $K_1$ elements, and there is a bijection $\pi:\set{1,\ldots,K_1}\to\set{1,\ldots,K_1}$ so that $F_{\star k}^2 = F_{\star \pi(k)}^1$. Let $i\in\set{1,\ldots,N}$. Then there is a unique $k$ so that $\Pi_{\star i}=F_{\star k}^2=F_{\star \pi(k)}^1$. As all columns in $F^1$ are different (and so for $F^2$), we have that $Q_{\star i}^1 = e_k$ and $Q_{\star i}^2=e_{\pi(k)}$. So $(F^1,Q^1)\sim (F^2,Q^2)$.  
\end{proof}
The following theorem shows that under the non-admixability assumption, the sets $\sF^{\text d}_K$ and $\sQ^{\text{ua}}_K$ cannot be enlarged so that their product space is still an identifiable model.
\begin{theorem}
  Let $K\ge 2$ and $N>K$.  Let $F\in\sF_K$ be such so that there are (at least) two identical columns.  Then there are $Q^1,Q^2\in\sQ_K^{\text{ua}}$ so that $FQ^1=FQ^2$, but $(F,Q^1)\not\sim (F,Q^2)$. 
  
  On the other hand, let $K\ge 2$ and $N\ge 1$, and let $Q\in\sQ_K\weg \sQ_K^{\text{ua}}$ be so that every column of $Q$ is of the form $e_i$, but for some $k$, $e_k$ is not a column of $Q$. Then for every $F\in\sF^{\text{ua}}_K$ there is a $F^2\in\sF^{\text{ua}}_K$ so that $FQ=F^2Q$, but $(F,Q)\not\sim (F^2,Q)$.
\end{theorem}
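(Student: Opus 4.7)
The plan is to produce explicit counterexamples in each of the two parts. In both cases, I will construct a second matrix yielding the same product $FQ$ and then show that no relabelling permutation can identify the two representations.

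For the first part, I would fix $k_1 \neq k_2$ with $F_{\star k_1} = F_{\star k_2}$ and exploit the fact that $N \geq K+1$ to reserve column $K+1$ as a free slot. Define
\[
Q^1 = (e_1, \ldots, e_K, e_{k_1}, e_{k_1}, \ldots, e_{k_1}), \qquad Q^2 = (e_1, \ldots, e_K, e_{k_2}, e_{k_1}, \ldots, e_{k_1}).
\]
Both matrices lie in $\sQ_K^{\text{ua}}$ because every $e_j$ already appears among the first $K$ columns, and $FQ^1 = FQ^2$ since $F e_{k_1} = F e_{k_2}$. For non-equivalence, any permutation $\pi$ with $Q^2_{ki} = Q^1_{\pi(k)i}$ transforms a column $e_j$ of $Q^1$ into $e_{\pi^{-1}(j)}$ in $Q^2$; comparing the first $K$ columns forces $\pi$ to be the identity, which then contradicts $e_{k_2} \neq e_{k_1}$ at column $K+1$.

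For the second part, let $S = \{k : e_k \text{ is a column of } Q\} \subsetneq \{1, \ldots, K\}$, pick any $k_0 \in S^c$, and observe that the $k_0$-th row of $Q$ vanishes. I would then choose $v \in [0,1]^M$ different from every column of $F$ (possible because $[0,1]^M$ is uncountable while only $K$ vectors must be avoided) and take $F^2$ to be $F$ with its $k_0$-th column replaced by $v$. Then $F^2 \in \sF_K^{\text{d}}$, and $FQ = F^2Q$ because the only column in which $F$ and $F^2$ differ is multiplied by a zero row of $Q$. To rule out $(F, Q) \sim (F^2, Q)$, suppose $\pi$ witnesses such an equivalence. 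The row constraint $Q_{k\star} = Q_{\pi(k)\star}$ sends zero rows to zero rows, so $\pi(S^c) = S^c$; the nonzero rows are distinct indicators of disjoint nonempty sets, so $\pi$ must be the identity on $S$. On $S^c \setminus \{k_0\}$ the columns of $F$ and $F^2$ coincide, so the column condition $F^2_{\star k} = F_{\star \pi(k)}$ combined with the distinct-columns hypothesis on $F$ forces $\pi$ to be the identity there as well. Bijectivity then yields $\pi(k_0) = k_0$, whence $v = F_{\star k_0}$, contradicting the choice of $v$.

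The main subtlety I expect is in the second part, where one must pin down $\pi$ on all of $S^c$ and not just on $S$. The key observation is that modifying only a single column of $F$ leaves the remaining columns intact and pairwise distinct by the $\sF_K^{\text{d}}$ hypothesis, and this distinctness cascades through the column-matching condition to fix $\pi$ on $S^c \setminus \{k_0\}$ and hence, by bijectivity, at $k_0$ itself.
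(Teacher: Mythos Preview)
Your proposal is correct and follows essentially the same constructions as the paper: for the first part the paper also uses $Q^1=(I_K,e_k,\ldots,e_k)$ and $Q^2=(I_K,e_\ell,\ldots,e_\ell)$ (you differ only cosmetically by changing just column $K+1$), and for the second part the paper likewise replaces the unused column $F_{\star k_0}$ by a fresh vector distinct from all columns of $F$. The paper simply asserts the non-equivalence in both cases, whereas you supply the explicit permutation argument; your extra verification is sound and indeed fills in what the paper leaves to the reader.
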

\begin{proof}
Let $k,\ell\in\set{1,\ldots,K},k\neq \ell$ so that the $k$th and $\ell$th column of $F$ are identical. Consider \begin{align*}
    Q^1 = & (I, e_k,\ldots,e_k),\\
    Q^2 = & (I,e_\ell,\ldots,e_\ell),
\end{align*} 
then $FQ^1=FQ^2$, but $(F,Q^1)\not\sim (F,Q^2)$. 
    
    For the second statement, let $F^2\in\sF_K^{\text d}$ be any matrix so that $F^2_{\star \ell}=F_{\star \ell}$ for all $\ell\neq k$, and  $F^2_{\star k}$ is a column different to all columns in $F$. Then $FQ=F^2Q$, as $e_k$ is not a column in $Q$, but $(F,Q)\not\sim (F^2,Q)$.  
\end{proof}
So the unadmixed case is easy: provided $N>K$, our sufficient conditions are also necessary.  So we see here again the phenomenon that shrinking the sets from $\sQ_K^{\text{in}}$ to $\sQ_K^{\text{an}}$ to $\sQ_K^{\text{ua}}$ allows us to enlarge the sets of allowable $F\in\sF_K$, from $\sF_K^{\text{an}}$ to  $\sF_K^{\text{in}}$ to $\sF_K^{\text{d}}$.  

\appendix

\section{Convex sets}\label{sec:convexsets}

This section develops the theory on convex sets required for the proofs. 
A convex set is a subset of a real linear space so that for every $x,y\in C$ and $\lambda\in[0,1]$, $\lambda x + (1-\lambda)y\in C$.  
Let $S$ be a non-empty set of a real vector space. Then we define the convex set \textbf{generated} by $S$ as 
\begin{equation}\label{def:convexhull}
\co(S)= \set{\sum_{k=1}^N\lambda_k x_k: N\in\NN, x_1,\ldots,x_N\in S, \lambda_1,\ldots,\lambda_N\ge 0, \sum_{k=1}^N\lambda_k=1}. 
\end{equation}

\subsection{Unique decompositions}

In this subsection I am interested under what conditions the elements of $\co(S)$ have a unique convex decomposition in terms of the elements of $S$. So when $\lambda_1,\ldots,\lambda_K,\mu_1,\ldots,\mu_K\ge 0$ and $\sum_{k=1}^K\lambda_k=\sum_{k=1}^K \mu_k=1$, and $\sum_{k=1}\lambda_kx_k=\sum_{k=1}^K \mu_kx_k$, $x_1,\ldots,x_k\in S$, is then also $\lambda_i=\mu_i$ for all $i$? I'll develop precise necessary and sufficient conditions for this. 

\begin{definition}\label{def:uniqueconvexcombination}
	Let $C$ be the convex hull of $v_1,\ldots,v_m$. An element $v\in C$ has a unique convex combination of elements $v_1,\ldots,v_m$ when $\lambda_1,\ldots,\lambda_m,\mu_1,\ldots,\mu_m\ge 0,\sum_{i=1}^m\mu_i=\sum_{i=1}^m\lambda_i=1$ and $v=\lambda_1v_1+\ldots+\lambda_mv_m=\mu_1v_1+\ldots+\mu_mv_m$ impies $\lambda_i=\mu_i$, for all $i=1,\ldots,m$. 
\end{definition}

\begin{lemma}\label{lem:uniqueconvexcombinationintermsofindependentvectors}
	Let $V$ be a real vector space. Let $k\in\NN$. Let $v_1,\ldots,v_{k+1}\in V$ and let $C$ be the convex hull of $v_1,\ldots,v_{k+1}$. 
	Then each element of $C$ has a unique convex combination of elements of $v_1,\ldots,v_{k+1}$ if and only if  $v_1-v_{k+1},\ldots,v_{k}-v_{k+1}$ are linearly independent.
\end{lemma}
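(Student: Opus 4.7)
The plan is a direct two-way argument keyed to the observation that a convex-combination identity $\sum_{i=1}^{k+1}\lambda_i v_i=\sum_{i=1}^{k+1}\mu_i v_i$ with $\sum\lambda_i=\sum\mu_i=1$ is equivalent, after setting $\alpha_i=\lambda_i-\mu_i$, to having coefficients $\alpha_1,\ldots,\alpha_{k+1}$ that sum to zero and satisfy $\sum_{i=1}^{k+1}\alpha_i v_i=0$. Since $\alpha_{k+1}=-\sum_{i=1}^{k}\alpha_i$, this in turn rewrites as
\[
\sum_{i=1}^{k}\alpha_i\bigl(v_i-v_{k+1}\bigr)=0.
\]
Uniqueness of convex decompositions in $C$ is therefore equivalent to the statement that the only such $(\alpha_1,\ldots,\alpha_k)$ arising from a pair $(\lambda,\mu)$ of probability vectors is $0$.

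For the ``if'' direction, assume $v_1-v_{k+1},\ldots,v_k-v_{k+1}$ are linearly independent. Given any two convex combinations producing the same element, the displayed identity above and linear independence force $\alpha_1=\cdots=\alpha_k=0$, and then $\alpha_{k+1}=0$ as well. Hence $\lambda_i=\mu_i$ for every $i$, which is exactly the uniqueness condition in \cref{def:uniqueconvexcombination}.

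For the ``only if'' direction, I will argue by contrapositive. Suppose $v_1-v_{k+1},\ldots,v_k-v_{k+1}$ are linearly dependent, so there exist scalars $\alpha_1,\ldots,\alpha_k$, not all zero, with $\sum_{i=1}^k\alpha_i(v_i-v_{k+1})=0$. Define $\alpha_{k+1}=-\sum_{i=1}^{k}\alpha_i$, so $\sum_{i=1}^{k+1}\alpha_i=0$ and $\sum_{i=1}^{k+1}\alpha_i v_i=0$; note not all $\alpha_i$ vanish. To convert this to two distinct convex decompositions, start from the barycentric point with $\lambda_i=1/(k+1)$ for all $i$, and set $\mu_i=\lambda_i+\varepsilon\alpha_i$. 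For $\varepsilon>0$ small enough, all $\mu_i$ are nonnegative; they sum to $1$ because the $\alpha_i$ sum to $0$; and $\sum\mu_i v_i=\sum\lambda_i v_i$ because the $\alpha_i v_i$ sum to $0$. Since the $\alpha_i$ are not all zero, $\lambda\neq\mu$, giving two different convex combinations of the same element of $C$, contradicting uniqueness.

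The only mildly delicate step is the construction in the ``only if'' direction: one must pick an initial convex combination from which the perturbation by $\varepsilon\alpha$ stays in the nonnegative orthant. Choosing the barycenter $\lambda_i=1/(k+1)$, whose coordinates are bounded away from $0$, makes this automatic for sufficiently small $\varepsilon$, and this is the one place where the argument is not just bookkeeping.
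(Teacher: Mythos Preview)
Your proof is correct. The ``if'' direction is essentially identical to the paper's argument (subtract the two representations, use that the coefficients sum to zero, and reduce to a linear relation among the $v_i-v_{k+1}$).

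In the ``only if'' direction you take a different, and somewhat cleaner, route. The paper splits the dependence relation $\sum_{i=1}^{k}\alpha_i(v_i-v_{k+1})=0$ into its positive part $I=\{i:\alpha_i>0\}$ and non-positive part $J=\{i:\alpha_i\le 0\}$, then normalises by $M=\max\bigl(\sum_{i\in I}\alpha_i,\sum_{i\in J}(-\alpha_i)\bigr)$ and pads with a suitable multiple of $v_{k+1}$ to manufacture two explicit convex combinations of the same point. Your perturbation argument---starting from the barycentre $\lambda_i=1/(k+1)$ and moving to $\mu_i=\lambda_i+\varepsilon\alpha_i$ for small $\varepsilon>0$---achieves the same end with less bookkeeping, at the cost of a non-constructive choice of $\varepsilon$. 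The paper's version has the minor advantage that the two convex combinations it produces are fully explicit and do not depend on an unspecified small parameter; yours has the advantage of transparency and of making the underlying idea (affine dependence $\Leftrightarrow$ non-unique barycentric coordinates) more visible.
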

\begin{proof}
First we prove that when $v_1-v_{k+1},\ldots,v_k-v_{k+1}$ are linearly independent, that each element of $C$ has a unique convex combination of elements $v_1,\ldots,v_{k+1}$.

Let $v\in C $ and let $v=\lambda_1v_1+\ldots+\lambda_{k+1}v_{k+1}=\mu_1v_1+\ldots+\mu_{k+1}v_{k+1}$ be convex combinations of $v$. Then, making use of the fact that $\lambda_1+\ldots+\lambda_{k+1}=\mu_1+\ldots+\mu_{k+1}=1$, $
v-v_{k+1}=\lambda_1(v_1-v_{k+1})+\ldots+\lambda_{k+1}(v_{k+1}-v_{k+1})=\mu_1(v_1-v_{k+1})+\ldots+\mu_{k+1}(v_{k+1}-v_{k+1})$,
so 
\[\lambda_1(v_1-v_{k+1})+\ldots+\lambda_{k}(v_{k}-v_{k+1})=\mu_1(v_1-v_{k+1})+\ldots+\mu_{k}(v_{k}-v_{k+1}).\]
It follows from the fact that $v_1-v_{k+1},\ldots,v_k-v_{k+1}$ are linearly independent, that $\lambda_i=\mu_i$ for all $i=1,\ldots,k$. Finally, $\lambda_{k+1}=1-\lambda_1-\ldots-\lambda_k=1-\mu_1-\ldots-\mu_k=\mu_{k+1}$. So  $v$ has a unique convex combination.

For the proof in the other direction, suppose $v_1-v_{k+1},\ldots,v_k-v_{k+1}$ are not linearly independent. We will show, that there is an element in the convex hull of $v_1,\ldots,v_{k+1}$ that does not have a unique convex combination.

From the linear dependence of  $v_1-v_{k+1},\ldots,v_k-v_{k+1}$ follows that there are $\alpha_1,\ldots,\alpha_k$, not all zero, so that $
\alpha_1(v_1-v_{k+1})+\ldots+\alpha_k(v_k-v_{k+1})=0$. 
Let $I=\set{i:\alpha_i>0}$ and $J=\set{i:\alpha_i\le 0}$.  So 
\[
\sum_{i\in I}\alpha_i(v_i-v_{k+1})=\sum_{i\in J}-\alpha_i(v_i-v_{k+1}).
\]
As at least one $\alpha_i\neq 0$, $i\in\set{1,\ldots,k}$, at least one of $\sum_{i\in I}\alpha_i$ or $\sum_{i\in J}-\alpha_i$ is positive, and both are non-negative. Let $M=\max\set{\sum_{i\in I}\alpha_i, \sum_{i\in J}-\alpha_i}>0$. Let $\beta=M-\sum_{i\in I}\alpha_i$ and $\gamma=M-\sum_{i\in J}-\alpha_i$. Note that $\beta,\gamma\ge 0$, and that 
$\beta+\sum_{i\in I}\alpha_i=\gamma+\sum_{i\in J}-\alpha_i=M$. As $v_{k+1}-v_{k+1}=0$, we have 
\[
\frac\beta M(v_{k+1}-v_{k+1})+\sum_{i\in I}\frac{\alpha_i}M(v_i-v_{k+1})=\frac\gamma M(v_{k+1}-v_{k+1})+\sum_{i\in J}\frac{-\alpha_i}M(v_i-v_{k+1}).
\]
Using that 
$\frac\beta M+\sum_{i\in I}\frac{\alpha_i}M=\frac\gamma M+\sum_{i\in J}\frac{-\alpha_i}M=1$, adding $v_{k+1}$ on both sides gives
\[
\frac\beta Mv_{k+1}+\sum_{i\in I}\frac{\alpha_i}Mv_i=\frac\gamma Mv_{k+1}+\sum_{i\in J}\frac{-\alpha_i}Mv_i.
\]
As $I$ and $J$ are disjoint, and at least one of $\alpha_i\neq 0$, it follows that this are two different convex combinations of $v_1,\ldots,v_{k+1}$ of the same element $\frac\beta Mv_{k+1}+\sum_{i\in I}\frac{\alpha_i}Mv_i$. 
\end{proof}

\begin{lemma}\label{lem:twoconvexcombinationsimpliesinfiniteconvexcombinations}
	Let $V$ be a vector space, and $C$ the convex hull of $v_1,\ldots,v_m\in V$. When $v\in C$ has two different convex combinations of $v_1,\ldots,v_m$, then $v$ has infinitely many convex combinations of $v_1,\ldots,v_m$. 
\end{lemma}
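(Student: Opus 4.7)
The plan is to exploit the fact that the set of probability vectors $(\lambda_1,\ldots,\lambda_m)$ representing $v$ as a convex combination of $v_1,\ldots,v_m$ is itself convex: given any two such representations, every convex combination of them is again such a representation. Interpolating continuously then produces uncountably many (and in particular infinitely many) distinct decompositions.

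Concretely, suppose $v=\sum_{i=1}^m\lambda_iv_i=\sum_{i=1}^m\mu_iv_i$ are two different convex combinations, so $\lambda_i,\mu_i\ge 0$, $\sum_i\lambda_i=\sum_i\mu_i=1$, and there is some index $j$ with $\lambda_j\ne\mu_j$. For each $t\in[0,1]$ define $c_i(t)=t\lambda_i+(1-t)\mu_i$. First I would verify that $(c_1(t),\ldots,c_m(t))$ is a probability vector: each $c_i(t)\ge 0$ as a convex combination of nonnegatives, and $\sum_i c_i(t)=t+(1-t)=1$. Next, by linearity, $\sum_i c_i(t)v_i=t\sum_i\lambda_iv_i+(1-t)\sum_i\mu_iv_i=tv+(1-t)v=v$. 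So every $t\in[0,1]$ yields a convex combination of $v_1,\ldots,v_m$ equal to $v$.

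Finally, I would argue these decompositions are mutually distinct: for the index $j$ fixed above, $c_j(t_1)-c_j(t_2)=(t_1-t_2)(\lambda_j-\mu_j)$, which is nonzero whenever $t_1\ne t_2$, so the map $t\mapsto(c_1(t),\ldots,c_m(t))$ from $[0,1]$ into the simplex is injective. Hence $v$ admits uncountably many (in particular infinitely many) different convex combinations of $v_1,\ldots,v_m$.

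There is essentially no obstacle here — the only thing worth noting is that the assumption $(\lambda_i)\ne(\mu_i)$ is used in exactly one place, namely to pick the index $j$ on which the interpolation strictly varies; without that index, the interpolation would collapse to a single point in the simplex rather than a line segment.
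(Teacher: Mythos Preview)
Your proof is correct and is essentially identical to the paper's own argument: both interpolate between the two coefficient vectors via $t\lambda_i+(1-t)\mu_i$ for $t\in[0,1]$, check that each interpolant is a convex combination representing $v$, and use the index where $\lambda$ and $\mu$ differ to see that distinct $t$ give distinct coefficient vectors. The only difference is notation (your $t$, $c_i(t)$, $j$ versus the paper's $\alpha$, $\nu_i(\alpha)$, $i_0$).
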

\begin{proof}
	Suppose 
	 \[
		v=\sum_{i=1}^m \lambda_iv_i=\sum_{i=1}^m \mu_iv_i
		\]
	are two different convex combinations of $v$. So for some $i_0\in\set{1,\ldots,m}$, $\lambda_{i_0}\neq \mu_{i_0}$. Let $\alpha\in[0,1]$. Note that 
	 \[
		v=\sum_{i=1}^m (\alpha \lambda_i+(1-\alpha)\mu_i)v_i=:\sum_{i=1}^m \nu_i(\alpha)v_i, 
		\]
		is also a convex combination of $v$. When $\alpha_1\neq\alpha_2$, $\nu_{i_0}(\alpha_1)-\nu_{i_0}(\alpha_2)=(\alpha_1-\alpha_2)(\lambda_{i_0}-\mu_{i_0})\neq0$. Hence there are infinitely many convex combinations of $v$.   
\end{proof}

\begin{definition}\label{def:openconvexcombination}
	Let $C$ be a convex set. A convex combination \[
	v=\sum_{i=1}^m \lambda_iv_i
	\]
	is open when for all $i\in\set{1,\ldots,m}$, $\lambda_i>0$. 
\end{definition}

\begin{definition}
	Let $V$ be a real vector space and let $v_1,\ldots,v_m\in V$. We define the open convex set generated by $v_1,\ldots,v_m$ to be the set of all open convex combinations of $v_1,\ldots,v_m$. 
\end{definition}

Note that when $V=\re^n$ and $m>2$, then the open convex set is also open in the topological sense of the word. This is not the case when $m=1$. 

\begin{remark}
    To distinguish between the open convex hull and the ``usual" convex hull, we call the later sometimes the ``closed convex hull".  
\end{remark}

\begin{lemma}\label{lem:propertiesopenconvexset}
	Let $V$ be a real vector space and let $C^\circ$ be the open convex set generated by $v_1,\ldots,v_m\in V$. Let $C$ be the convex set generated by $v_1,\ldots,v_m$. Then $C^\circ$ is convex and $\leeg\neq C^\circ\subseteq C$. 
\end{lemma}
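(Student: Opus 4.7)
The plan is to verify the three assertions separately, and none of them should pose a serious obstacle since the lemma is essentially a bookkeeping statement about convex coefficients.

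For $\leeg\neq C^\circ$, I would just exhibit a witness: take $\lambda_1=\ldots=\lambda_m=1/m$. These are strictly positive and sum to one, so $\frac{1}{m}\sum_{i=1}^{m}v_i$ is an open convex combination, hence lies in $C^\circ$. (In particular, this shows that the open convex hull is never empty when the generating set is non-empty.)

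For $C^\circ\subseteq C$, the inclusion is immediate from the definitions: every open convex combination is in particular a convex combination (i.e.\ coefficients $\lambda_i\ge 0$ summing to one), so any $v\in C^\circ$ automatically satisfies \cref{def:convexhull} and belongs to $C$.

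For convexity of $C^\circ$, take two elements $u=\sum_{i=1}^{m}\lambda_i v_i$ and $w=\sum_{i=1}^{m}\mu_i v_i$ with all $\lambda_i,\mu_i>0$ and $\sum_i\lambda_i=\sum_i\mu_i=1$, and let $\alpha\in[0,1]$. Then
\[
\alpha u+(1-\alpha)w=\sum_{i=1}^{m}\bigl(\alpha\lambda_i+(1-\alpha)\mu_i\bigr)v_i,
\]
and the new coefficients sum to $\alpha+(1-\alpha)=1$. The only thing to check is that each coefficient is strictly positive. Since $\alpha\in[0,1]$, at least one of $\alpha$ and $1-\alpha$ is positive; combined with $\lambda_i,\mu_i>0$, this forces $\alpha\lambda_i+(1-\alpha)\mu_i>0$ for every $i$. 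Hence $\alpha u+(1-\alpha)w\in C^\circ$, and $C^\circ$ is convex.

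The main (and only) subtle point is the endpoint case $\alpha\in\set{0,1}$ in the convexity argument; it is handled simply by observing that in each of those cases the combination reduces to $u$ or $w$, both of which were already assumed to lie in $C^\circ$. No further machinery is needed.
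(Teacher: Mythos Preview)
Your proof is correct and follows essentially the same approach as the paper's own proof: the same witness $\frac{1}{m}\sum_i v_i$ for nonemptiness, the same observation that open convex combinations are convex combinations for the inclusion, and the same computation $\alpha\lambda_i+(1-\alpha)\mu_i$ for convexity. Your treatment is slightly more explicit about why the coefficients remain strictly positive (including the endpoint cases $\alpha\in\{0,1\}$), but there is no substantive difference.
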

\begin{proof}
	It is obvious that $C^\circ$ is contained in the convex set generated by $v_1,\ldots,v_m$. We have that $(1/m)v_1+\ldots+(1/m)v_m\in C^\circ$, so $C^\circ$ is not empty. 
	
	Let $v=\sum_{i=1}^m \lambda_iv_i, w=\sum_{i=1}^m \mu_iv_i\in C^\circ$, $\lambda_i,\mu_i>0$ for all $i$. Let $\alpha\in [0,1]$. Then 
	\[
	\alpha v+(1-\alpha)w=\sum_{i=1}^m (\alpha\lambda_i+(1-\alpha)\mu_i)v_i. 
	\]
	Note that $\sum_{i=1}^m (\alpha\lambda_i+(1-\alpha)\mu_i)=1$, and $\alpha\lambda_i+(1-\alpha)\mu_i>0$, for all $i\in\set{1,\ldots,m}$. Hence $\alpha v+(1-\alpha)w\in C^\circ$. So $C^\circ$ is convex. 
\end{proof}

\begin{lemma}
Let $V$ be a vector space and let $C^\circ$ (resp. $C$) be the open (resp. closed) convex set generated by $v_1,\ldots,v_m\in V$. 
 The following statements are equivalent:
	\begin{enumerate}[(i)]
		\item There is an element $v\in C$ that does not have a unique convex combination of $v_1,\ldots,v_m$. 
		\item Every element of $v\in C^\circ$ does not have a unique convex combination of $v_1,\ldots,v_m$. 
		\item For every element $v\in C^\circ$ there are infinitely many convex combinations of $v_1,\ldots,v_m$. 
	\end{enumerate}
\end{lemma}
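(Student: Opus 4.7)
The plan is to arrange the three equivalences as a short cycle, with the only substantive step being $(i) \Rightarrow (ii)$. The remaining links are essentially immediate: $(iii) \Rightarrow (ii)$ holds because having infinitely many distinct convex combinations precludes uniqueness; $(ii) \Rightarrow (iii)$ is a direct application of \cref{lem:twoconvexcombinationsimpliesinfiniteconvexcombinations}, which promotes two distinct convex combinations of a given point to infinitely many; and $(ii) \Rightarrow (i)$ follows from \cref{lem:propertiesopenconvexset}, which guarantees that $C^\circ$ is nonempty and contained in $C$, so any witness in $C^\circ$ is also a witness in $C$.

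For the substantive implication $(i) \Rightarrow (ii)$ I would first apply the contrapositive of \cref{lem:uniqueconvexcombinationintermsofindependentvectors}: the existence of some $v\in C$ lacking a unique convex decomposition forces $v_1 - v_m,\ldots,v_{m-1} - v_m$ to be linearly dependent. This yields scalars $\alpha_1,\ldots,\alpha_{m-1}$, not all zero, with $\sum_{i=1}^{m-1}\alpha_i(v_i - v_m) = 0$. Setting $\alpha_m := -\sum_{i=1}^{m-1}\alpha_i$ then produces a nontrivial tuple $(\alpha_1,\ldots,\alpha_m)$ satisfying $\sum_{i=1}^m \alpha_i = 0$ and $\sum_{i=1}^m \alpha_i v_i = 0$.

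Now, given an arbitrary $w = \sum_{i=1}^m \lambda_i v_i \in C^\circ$ with all $\lambda_i > 0$, I would perturb the coefficients to $\lambda_i + \eps\alpha_i$, choosing $\eps>0$ small enough that $\lambda_i + \eps\alpha_i \ge 0$ for every $i$ (concretely, $\eps < \min\set{\lambda_i/|\alpha_i| : \alpha_i \ne 0}$). These perturbed coefficients still sum to $1$ and still combine to $w$, yet they differ from $(\lambda_i)$ in at least one coordinate because some $\alpha_i \ne 0$. Hence $w$ has a second distinct convex combination of $v_1,\ldots,v_m$, which establishes (ii).

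The only genuine technical point is this perturbation step, where strict positivity of all $\lambda_i$ -- precisely the defining property of $C^\circ$ -- is what keeps the perturbed coefficients nonnegative. This also explains why the quantifier ``for every $v$'' in (ii) and (iii) ranges over $C^\circ$ rather than the larger set $C$; on the boundary, small perturbations would typically escape the simplex of convex coefficients. Beyond that, no real obstacle appears.
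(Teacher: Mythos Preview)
Your proof is correct and follows the same overall architecture as the paper: the easy implications are handled identically, and the substantive step $(i)\Rightarrow(ii)$ is done by producing a nontrivial tuple $(\alpha_i)$ with $\sum_i\alpha_i=0$ and $\sum_i\alpha_iv_i=0$, then perturbing the strictly positive coefficients of an arbitrary $w\in C^\circ$ along this direction.

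The only difference is in how that tuple is obtained. You route through the contrapositive of \cref{lem:uniqueconvexcombinationintermsofindependentvectors} to first deduce linear dependence of $v_1-v_m,\ldots,v_{m-1}-v_m$, and then manufacture the $\alpha_i$ from a dependence relation. The paper is more direct: given the two distinct convex combinations $v=\sum_i\lambda_iv_i=\sum_i\mu_iv_i$ promised by (i), it simply sets the perturbation direction to $\lambda_i-\mu_i$, which already satisfies $\sum_i(\lambda_i-\mu_i)=0$ and $\sum_i(\lambda_i-\mu_i)v_i=0$. This avoids invoking the characterization lemma and keeps the argument self-contained. Your approach buys nothing extra here, but it is perfectly valid; it just takes one more step than necessary.
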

\begin{proof}
Obviously, (iii)$\implies$(ii). As $C^\circ$ is not empty and contained in $C$ (\cref{lem:propertiesopenconvexset}), (ii)$\implies$(i). The implication (ii)$\implies$(iii) follows from \cref{lem:twoconvexcombinationsimpliesinfiniteconvexcombinations}. We are only left to prove (i)$\implies$(ii). Let $v\in C$ be an element so that \[
		v=\sum_{i=1}^m \lambda_iv_i=\sum_{i=1}^m \mu_iv_i
		\]
		are two different convex combinations of $v$. 
		Note that 
		\[
		0 =\sum_{i=1}^m (\lambda_i-\mu_i)v_i. 
		\]
		Let $w\in C^\circ$ have an open convex combination \[
		w=\sum_{i=1}^m \nu_iv_i. 
		\]
		Let $\alpha=\min_i\nu_i>0$. As $\lambda_i-\mu_i\ge -1$, $\nu_i+\alpha(\lambda_i-\mu_i)\ge 0$, for all $i$, and $\sum_{i=1}^m(\nu_i+\alpha(\lambda_i-\mu_i))=\sum_{i=1}^m\nu_i+\alpha\sum_{i=1}^m(\lambda_i-\mu_i)=1+0=1$. 
		So 
		\[
		w = \sum_{i=1}^m (\nu_i+\alpha(\lambda_i-\mu_i))v_i. 
		\]
	is another convex combination of $w$, because for at least one $i\in\set{1,\ldots,m},\lambda_i\neq \mu_i$ and $\alpha>0$. 
\end{proof}

\subsection{Convex hulls}

In this subsection, I am interested in the smallest subsets $S$ of a convex set $C$, so that $S$ generates $C$ (in formula's $C=\co(S)$). It turns out that not every convex set $C$ has a smallest set $S$ that generates $C$, and even in cases where it happens, there is not always a unique decomposition in terms of the elements of $S$.  

\begin{definition}
	Let $S$ be a subset of a vector space, and let $C$ be the convex space generated by $S$. We call $S$ minimal, when for every $x\in S$, $C\neq \co(S\weg\set x)$. 
\end{definition}

Not every convex set has a minimal generating set. 
\begin{example}
    Consider the real numbers $\re$, which is a convex set, and let $S\subseteq \re  $ be a set that generates  $\re$.
    First note that $S$ is infinite, as otherwise $r=\max_{x\in S}|x|<\infty$ and $\co(S)\subseteq[-r,r]\neq \re$. It follows that there are $x,y,z\in S$ so that  $x<y<z$. Note that $y$ is a convex combination of $x$ and $z$, so $S\weg\set y$ also generates $\re$. So $S$ is not minimal. We chose $S$ arbitrary, so $\re$ has no minimal generating set. 
\end{example}

However, if $C$ is generated by a finite set $S$, then there exists a minimal set. 
\begin{lemma}\label{lem:finitegeneratedconvexsethasminimalset}
    Let $S$ be a finite non-empty subset of a real vector space. Let $C=\co(S)$. Then $C$ has a minimal set.
\end{lemma}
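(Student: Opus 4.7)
The plan is to start with the given finite generating set $S$ and iteratively discard elements that are redundant, i.e. elements which can be expressed as convex combinations of the remaining elements. Since $|S|$ is finite and strictly decreases at each step, the procedure terminates after finitely many iterations, and the resulting subset $S^* \subseteq S$ still satisfies $\co(S^*) = C$ while no further element can be removed. By construction, such an $S^*$ is a minimal generating set.

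Concretely, I would formalise this as an induction on $n = |S|$. For the base case $n = 1$, write $S = \{x_0\}$; then $C = \{x_0\} \neq \leeg = \co(\leeg)$, so the only candidate $S \setminus \{x_0\}$ fails to generate $C$, and $S$ is minimal. For the inductive step, assume the conclusion holds for all finite generating sets of cardinality strictly less than $n$, and let $|S| = n \ge 2$. If $S$ is already minimal, we are done. Otherwise, by the definition of minimality, there exists $x \in S$ with $\co(S \setminus \{x\}) = C$. Set $S' := S \setminus \{x\}$; then $|S'| = n - 1$ and $C = \co(S')$, so the induction hypothesis applied to $S'$ yields a minimal generating set of $C$.

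The argument is essentially bookkeeping, so the main obstacle is nothing deep, only the mild issue that the paper's \cref{def:convexhull} requires $N \ge 1$ and so does not formally cover the empty set. This is harmless: $C$ is non-empty (it contains every point of the original $S$), hence any generating set of $C$ must be non-empty, and so the stripping procedure can never reduce the set below cardinality one. One should also note that finiteness is genuinely used here, since (as the preceding example of $\re$ shows) infinitely generated convex sets may admit no minimal generating set at all.
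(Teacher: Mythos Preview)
Your proof is correct and follows essentially the same approach as the paper: iteratively remove redundant elements from the finite generating set until none can be removed, using finiteness to guarantee termination. The only cosmetic difference is that you phrase the argument as an induction on $|S|$, whereas the paper writes it as a terminating algorithm $S_0\supseteq S_1\supseteq\cdots\supseteq S_{i_0}$.
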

\begin{proof}
    Define $S_0=S$ and until $S_i$ is minimal, set $S_{i+1}=S_{i}\weg\set x$, where $x\in S_i$ is an element so that $C=\co(S_i)=\co(S_i\weg\set x)$, which exist when $S_i$ is not minimal. As $S_0$ is finite, and $S_{i+1}$ has one element less than $S_i$, this algorithm is destined to terminate after $i_0\ge 0$ steps. Note that $S_{i_0}$ is not empty and $C=\co(S_{i_0})$ and $S_{i_0}$ is minimal. 
\end{proof}

\begin{definition}\label{def:extremeset}
	Let $C$ be convex and $x\in C$. We call $x$ extreme, when there are no $y,z\in C$, $y\neq z$ and $\alpha\in(0,1)$ so that $x=\alpha y+ (1-\alpha)z$. 
\end{definition}

\begin{example}
    In $C=\set{(x,y)\in\re^2:0\le x\le 1, 0\le y \le 1}$, $(0,0),(0,1), (1,0)$ and $(1,1)$ are extreme points. 
\end{example}

\begin{lemma}\label{lem:theminimalsetisthesetofextrema}
		Let $C$ be a convex set generated by a minimal set $S$.  Then $S$ is the set of all extrema of $C$. 
	\end{lemma}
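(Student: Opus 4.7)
The plan is to prove the two inclusions $S\subseteq\text{ext}(C)$ and $\text{ext}(C)\subseteq S$ separately.

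For $S\subseteq\text{ext}(C)$, I would argue by contrapositive: suppose $x\in S$ is not extreme. Then there exist $y,z\in C$ with $y\neq z$ and $\alpha\in(0,1)$ such that $x=\alpha y+(1-\alpha)z$. Since $C=\co(S)$, write
\[
y=\lambda_x x+\sum_{s\in S\setminus\{x\}}\lambda_s s,\qquad z=\mu_x x+\sum_{s\in S\setminus\{x\}}\mu_s s,
\]
as convex combinations. Substituting gives $x=\beta x+\alpha y'+(1-\alpha)z'$ where $\beta=\alpha\lambda_x+(1-\alpha)\mu_x$ and $y',z'$ are nonnegative combinations of $S\setminus\{x\}$. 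If $\beta=1$, then since $\alpha,1-\alpha>0$ and $\lambda_x,\mu_x\in[0,1]$, we must have $\lambda_x=\mu_x=1$, forcing $y=z=x$, contradicting $y\neq z$. Hence $\beta<1$ and one can solve for $x$, writing it as a convex combination of $S\setminus\{x\}$; this contradicts the minimality of $S$.

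For $\text{ext}(C)\subseteq S$, let $x$ be an extreme point. Since $x\in C=\co(S)$, write $x=\sum_{i=1}^{k}\lambda_i s_i$ with distinct $s_i\in S$ and all $\lambda_i>0$. If $k=1$, then $x=s_1\in S$ immediately. Otherwise $\lambda_1\in(0,1)$, so set $u=\sum_{i=2}^{k}\frac{\lambda_i}{1-\lambda_1}s_i\in C$, giving $x=\lambda_1 s_1+(1-\lambda_1)u$. If $s_1\neq u$, this exhibits $x$ as a proper convex combination of two distinct points of $C$, contradicting extremity. Therefore $s_1=u$, which forces $x=s_1\in S$.

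I do not expect a real obstacle here; the argument is purely combinatorial/algebraic. The only slightly delicate point is the coefficient-$1$ edge case in the first direction, where one has to check that the combined weight on $x$ cannot be exactly $1$ unless $y=z=x$. The corresponding edge case in the second direction (a single-term decomposition, or $s_1=u$) is handled by simply observing that both situations conclude $x\in S$ directly.
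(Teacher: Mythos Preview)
Your proof is correct and follows essentially the same strategy as the paper: contrapositive for $S\subseteq\text{ext}(C)$ (express $y,z$ through $S$, isolate the coefficient on $x$, rule out the coefficient-$1$ case, then divide to place $x$ in $\co(S\setminus\{x\})$), and for $\text{ext}(C)\subseteq S$ the observation that an extreme point cannot sit in $\co(S\setminus\{x\})$. The paper dispatches the second inclusion in a single line, whereas you spell out the $k=1$ and $s_1=u$ subcases explicitly; this is the only real difference, and it is one of presentation rather than method.
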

	\begin{proof}
For an extremum $x\in C$, there are no $y,z\in C,y\neq z$ and $\alpha\in(0,1)$ so that $x=\alpha y +(1-\alpha)z$. So $\co(S\weg \set x)$ does not contain $x$. Hence $x\in S$. 
	
Suppose $x\in S$ is not extreme. Then there are $y,z\in C,x\neq y$ and $\alpha\in(0,1)$ so that $x=\alpha y+(1-\alpha)z$. Then there are mutually different elements $x_1,\ldots,x_m\in S$, $m\ge 1$, so that $y,z$ are convex combinations 
\[
y=\sum_{i=1}^m \beta_ix_i,\en z=\sum_{i=1}^m \gamma_ix_i. 
\]	
Note that we can choose this $x_i$ so that at least one of $\beta_i$ or $\gamma_i$ is positive, for every $i\in\set{1,\ldots,m}$. 
So \[
x= \sum_{i=1}^m (\alpha\beta_i+(1-\alpha)\gamma_i)x_i. 
\]
If all $x_i\neq x$, then $x$ is a convex combination of other elements of $S$, and so $C=\co(S\weg\set x)$, so $S$ is not minimal. Contradiction. So $x$ is equal to some $x_i$. 
After relabelling, if necessary, we may assume $x=x_1$. As $y\neq z$, either $\beta_1<1$ or $\gamma_1<1$, or both. In particular $m\ge 2$. We already assumed that $\beta_1>0$ or $\gamma_1>0$. So $0<\alpha\beta_1+(1-\alpha)\gamma_1<1$. So 
\[
(1-(\alpha\beta_1+(1-\alpha)\gamma_1))x =  \sum_{i=2}^m (\alpha\beta_i+(1-\alpha)\gamma_i)x_i.
\]
Note that $\sum_{i=2}^m (\alpha\beta_i+(1-\alpha)\gamma_i)=1- (\alpha\beta_1+(1-\alpha)\gamma_1)$, so 
\[
x =  \sum_{i=2}^m \frac{\alpha\beta_i+(1-\alpha)\gamma_i}{1-(\alpha\beta_1+(1-\alpha)\gamma_1)}x_i,
\]
is a convex combination of elements from $S\weg \set x$. So $C=\co(S\weg\set x)$, so $S$ is not minimal. Contradiction. As this were all posibilities, it follows that all elements of $S$ are extreme. 
\end{proof}

However, a set of extreme points does not necessarily generate the convex set. 

\begin{example} Consider \[
	C = \set{(x,y)\in \re ^ 2: x^2+y^2\le 1, \text{ when }x,y\ge 0, \text{ otherwise }x^2+y^2<1}. 
	\]
	Then $C$ is convex, and the set of extreme points is 
	\[
	E = \set{(x,y):x^2+y^2=1, x\ge 0, y\ge 0}. 
	\]
	But $C$ is not generated by $E$. 
\end{example}

A corollary to \cref{lem:theminimalsetisthesetofextrema} is 
\begin{corollary}\label{cor:atmostoneminimalset}
	A convex set $C$ has at most one minimal set. 
\end{corollary}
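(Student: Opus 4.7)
The plan is to derive this directly from \cref{lem:theminimalsetisthesetofextrema}, since that lemma has already done all the substantive work: it identifies any minimal generating set of $C$ with a canonical object depending only on $C$ itself, namely the set of extreme points of $C$.

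Concretely, I would suppose that $S_1$ and $S_2$ are both minimal subsets of $C$ with $\co(S_1)=\co(S_2)=C$. Applying \cref{lem:theminimalsetisthesetofextrema} to $S_1$ yields that $S_1$ equals the set of extreme points of $C$, and applying the same lemma to $S_2$ yields that $S_2$ also equals the set of extreme points of $C$. Therefore $S_1=S_2$, and $C$ has at most one minimal generating set.

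There is no real obstacle here; the entire content sits in the previous lemma, and the corollary is just the observation that ``the set of extreme points of $C$'' is intrinsic to $C$, so any two descriptions of it must coincide. The only thing to be slightly careful about is that \cref{lem:theminimalsetisthesetofextrema} is applicable to both $S_1$ and $S_2$, which it is, as both are assumed to be minimal sets generating the same convex set $C$.
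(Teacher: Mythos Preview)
Your proposal is correct and follows essentially the same approach as the paper: both apply \cref{lem:theminimalsetisthesetofextrema} to conclude that any minimal generating set must coincide with the set of extreme points of $C$, whence uniqueness.
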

\begin{proof}
    If $C$ has a minimal set $S$, then $S$ is the set of extrema. So $S$ is uniquely determined.
\end{proof}
So when a minimal set exists, it is unique, which allows us to speak about the minimal set. 

\begin{lemma}\label{lem:extremegeneratingsetisminimal}
    When $C$ is a convex set generated by its set $E$ of extrema, then $E$ is the minimal set. 
\end{lemma}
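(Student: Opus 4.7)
The plan is to show directly that no extreme point $x \in E$ can be removed from $E$ while still generating $C$, i.e.\ that $x \notin \co(E \setminus \{x\})$; once this is established, the minimality of $E$ follows immediately from the definition.

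First I would fix $x \in E$ and suppose for contradiction that $C = \co(E \setminus \{x\})$. Then in particular $x \in \co(E \setminus \{x\})$, so there exist finitely many (without loss of generality distinct) elements $x_1, \ldots, x_m \in E \setminus \{x\}$ and weights $\lambda_1, \ldots, \lambda_m > 0$ with $\sum_{i=1}^m \lambda_i = 1$ such that $x = \sum_{i=1}^m \lambda_i x_i$. The case $m = 1$ is immediate: it forces $x = x_1 \in E \setminus \{x\}$, which is a contradiction.

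Next, for $m \geq 2$, I would split the convex combination to produce an explicit nontrivial decomposition of $x$. Specifically, set $y = x_1$ and $z = \frac{1}{1 - \lambda_1} \sum_{i=2}^m \lambda_i x_i$. Since $\lambda_i > 0$ for $i \geq 2$ and $\sum_{i=2}^m \lambda_i = 1 - \lambda_1 > 0$, $z$ is a convex combination of elements of $E \subseteq C$, hence $z \in C$; clearly $y \in C$ as well. Furthermore $\lambda_1 \in (0,1)$ and $x = \lambda_1 y + (1 - \lambda_1) z$. It remains to check $y \neq z$: if $y = z$, then $x = y = x_1 \in E \setminus \{x\}$, contradicting $x_1 \neq x$. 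Therefore $y \neq z$, which contradicts the extremeness of $x$ via \cref{def:extremeset}.

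The contradiction shows $x \notin \co(E \setminus \{x\})$, so $C \neq \co(E \setminus \{x\})$ for every $x \in E$; hence $E$ is minimal. The only subtlety in this argument is handling the base case $m = 1$ separately and ensuring $y \neq z$ when splitting the combination — both are routine, so I do not expect any substantial obstacle.
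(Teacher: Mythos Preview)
Your proof is correct and follows essentially the same approach as the paper: suppose $E$ is not minimal, write the removable $x$ as a convex combination of other elements of $E$, and contradict extremeness. The paper compresses the contradiction into a single sentence (``But then $x$ is not extreme''), whereas you spell out the case $m=1$ and explicitly exhibit the pair $y\neq z$ witnessing non-extremeness; this extra care is harmless but not needed for the write-up.
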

\begin{proof}
    Suppose $E$ is not minimal, then there is an $x\in E$ so that $C$ is generated by $E\weg\set x$. So there are $x_1,\ldots,x_m\in E\weg \set x$ and $\lambda_1,\ldots,\lambda_m>0$, $\sum_{i=1}^m \lambda_i=1$ so that $x=\sum_{i=1}^m \lambda_ix_i$. But then $x$ is not extreme. Contradiction. 
\end{proof}

A corollary to \cref{lem:theminimalsetisthesetofextrema,lem:extremegeneratingsetisminimal} is 
\begin{corollary}\label{cor:characterisationofminimalsetsintermsofextremepointsandgenerating}
Let $C$ be a convex set generated by $S\subseteq C$. Then $S$ is minimal if and only if $S$ is the set of all extreme points.  
\end{corollary}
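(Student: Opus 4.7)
The plan is to assemble the two immediately preceding lemmas; the corollary is nothing more than their conjunction, and the statement is an equivalence, so I would split the argument into the two implications, each of which is a single invocation of an earlier result.

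For the forward direction I would assume that $S$ is minimal and cite \cref{lem:theminimalsetisthesetofextrema} directly. That lemma asserts exactly that a minimal generating set of a convex set coincides with the set of its extreme points, so no additional work is required in this direction.

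For the reverse direction I would assume that $S$ is the set of all extreme points of $C$, recalling that by the hypothesis of the corollary $S$ also generates $C$. Then $C$ is a convex set generated by its set of extrema, which is precisely the hypothesis of \cref{lem:extremegeneratingsetisminimal}, and that lemma immediately yields that $S$ is minimal.

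The main ``obstacle'' is really just ensuring that the hypotheses of the two lemmas are cleanly verified. In particular, the assumption that $C$ is generated by $S$ is indispensable for the reverse direction: without it, the set of extreme points of $C$ need not generate $C$ at all (as the quarter-disk example given earlier in the appendix illustrates), and so the notion of a ``minimal generating set'' consisting of precisely the extreme points would not even be well-posed. Beyond this bookkeeping, the corollary is immediate.
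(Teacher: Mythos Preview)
Your proposal is correct and matches the paper's approach exactly: the paper presents this corollary without a written proof, simply recording it as a consequence of \cref{lem:theminimalsetisthesetofextrema} and \cref{lem:extremegeneratingsetisminimal}, which is precisely the two-lemma split you describe.
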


As a corollary to \cref{lem:finitegeneratedconvexsethasminimalset,cor:atmostoneminimalset} we have that every finitely generated convex set has a unique minimal set. 
\begin{corollary}
    Let $S$ be a finite non-empty subset of a real linear space. Then $\co(S)$ has a unique minimal set. 
\end{corollary}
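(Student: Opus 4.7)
The plan is to simply combine the two results immediately preceding this corollary. Existence of a minimal set is exactly the content of \cref{lem:finitegeneratedconvexsethasminimalset}: starting from the finite non-empty generating set $S$ itself, one discards elements one at a time whenever the remaining elements still generate $\co(S)$, and since $|S|$ is finite this process must terminate after finitely many steps at a non-empty minimal generating set. Uniqueness is exactly the content of \cref{cor:atmostoneminimalset}, which applies to any convex set whatsoever: if a minimal set exists it must coincide with the set of extreme points of $C$ (via \cref{lem:theminimalsetisthesetofextrema}), and the set of extreme points is an intrinsic invariant of $C$ that does not depend on any choice of generators.

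Putting these together yields ``at least one'' plus ``at most one'' equals ``exactly one'' minimal set, which is the assertion of the corollary. There is no real obstacle here; the statement is a one-line bookkeeping consequence of the two cited results, and the only content that needs stating is which earlier result supplies existence and which supplies uniqueness. I would therefore write the proof as two sentences, one for each invocation, with no further computation.
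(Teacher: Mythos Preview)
Your proposal is correct and matches the paper's approach exactly: the paper states this as a corollary to \cref{lem:finitegeneratedconvexsethasminimalset} (existence) and \cref{cor:atmostoneminimalset} (uniqueness) without giving any further proof, which is precisely the two-sentence argument you outline.
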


However, not every element in a convex set $C$ generated by a minimum set $S$ has a unique decomposition in elements in $S$. 
\begin{example}
 Take for instance $C=\set{(x,y)\in\re^2:x^2+y^2\le 1}$, which has minimal set $S=\set{(x,y)\in\re^2:x^2+y^2=1}$. Then \[
	(0,0)=\frac12(-1,0)+\frac12(1,0)\en (0,0)=\frac12(0,-1)+\frac12(0,1). 
	\]
\end{example}
However, if $C$ is generated by $S=\set{x_1,\ldots,x_m}$ and every element in $C$ has a unique decomposition in terms of elements of $S$, then $S$ is minimal:
\begin{lemma}\label{lem:uniquedecompositionareextreme}
	Let $C$ be a convex set generated by $S=\set{x_1,\ldots,x_m}$. If every element in $C$ has a unique decomposition in terms of $S$, then $S$ is a minimal set. 
\end{lemma}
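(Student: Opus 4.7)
The plan is to proceed by contrapositive: I will assume that $S$ is not minimal and produce an element of $C$ that admits two distinct convex decompositions in terms of $S$, contradicting the hypothesis.

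Concretely, if $S=\set{x_1,\ldots,x_m}$ is not minimal, then by definition there exists some index $i\in\set{1,\ldots,m}$ such that $C=\co(S\weg\set{x_i})$. In particular, $x_i$ itself lies in $\co(S\weg\set{x_i})$, so there exist coefficients $\lambda_j\ge 0$ for $j\neq i$ with $\sum_{j\neq i}\lambda_j=1$ and $x_i=\sum_{j\neq i}\lambda_j x_j$. Setting $\lambda_i:=0$, we obtain a convex combination $x_i=\sum_{j=1}^m \lambda_j x_j$ whose coefficient on $x_i$ is zero.

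On the other hand, the trivial decomposition $x_i=1\cdot x_i+\sum_{j\neq i}0\cdot x_j$ puts coefficient $1$ on $x_i$. These two decompositions of the element $x_i\in C$ disagree in the $i$-th coefficient (one is $0$, the other is $1$), so $x_i$ does not have a unique convex combination in terms of the elements of $S$. This contradicts the hypothesis that every element of $C$ has a unique convex decomposition in terms of $S$, and hence $S$ must be minimal.

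There is essentially no obstacle here: the argument is a one-line contrapositive, since the defining property of non-minimality directly supplies a nontrivial convex representation of some generator $x_i$, and comparing it with the trivial representation $x_i=x_i$ immediately violates uniqueness. The only point one must be careful about is that the definition of ``unique convex combination'' (\cref{def:uniqueconvexcombination}) is phrased with all $m$ coefficients, which is exactly why padding the shorter combination by $\lambda_i=0$ is legitimate.
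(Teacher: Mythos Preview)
Your contrapositive argument is correct. If $S$ is not minimal, then some generator $x_i$ lies in $\co(S\weg\set{x_i})$, and comparing the resulting convex combination (with $i$-th coefficient $0$) to the trivial one (with $i$-th coefficient $1$) immediately violates uniqueness in the sense of \cref{def:uniqueconvexcombination}. There is no gap.

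The paper takes a slightly different route: rather than negating minimality directly, it shows that each $x_j\in S$ is an extreme point of $C$ (by writing a hypothetical $x_j=\alpha y+(1-\alpha)z$ in terms of $S$ and invoking uniqueness to force $y=z=x_j$), and then appeals to \cref{cor:characterisationofminimalsetsintermsofextremepointsandgenerating} to conclude that a generating set consisting of extreme points is minimal. Your argument is more self-contained and economical, since it goes straight from the definition of minimality to a violation of uniqueness without passing through the notion of extreme points or any auxiliary lemma. The paper's approach, on the other hand, yields the extra information that the generators are extreme, which is what is actually stated and used in \cref{rem:uniquedecompositionbygeneratorsimpliesthatthegeneratorsareextreme}; your argument gives minimality but would need \cref{lem:theminimalsetisthesetofextrema} to recover that the $x_j$ are extreme.
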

\begin{proof}
	Let $x_j\in S$. Suppose there are $y,z\in C$ and $\alpha\in (0,1)$ so that \[
	x_j= \alpha y + (1-\alpha)z. 
	\] 
	Then $y$ and $z$ have convex decompositions \[
	y = \sum_{i=1}^m \beta_ix_i,\en z = \sum_{i=1}^m \gamma_ix_i.
	\]
	So \[
	x_j= \sum_{i=1}^m (\alpha \beta_i+(1-\alpha)\gamma_i)x_i,
	\]
	is a convex decomposition of $x_j$ in terms of $x_1,\ldots,x_m$. 
As the convex decompositions are unique, $\beta_i=\gamma_i=0$ for all $i\neq j$, and $\beta_j=\gamma_j=1$, so $y=z$, so $x_j$ is extreme. So by \cref{cor:characterisationofminimalsetsintermsofextremepointsandgenerating} $S$ is a minimal set.  
\end{proof}

A corollary to \cref{lem:uniqueconvexcombinationintermsofindependentvectors,lem:uniquedecompositionareextreme} is 

\begin{corollary}\label{rem:uniquedecompositionbygeneratorsimpliesthatthegeneratorsareextreme}
Let $v_1,\ldots,v_k$ be vectors in a vectors space and let $C$ be the convex space generated by $v_1,\ldots,v_k$. When $v_1-v_k,\ldots,v_{k-1}-v_k$ are linearly independent, then $v_1,\ldots,v_k$ are extreme points of $C$ and $\set{v_1,\ldots,v_k}$ is a minimal set. 
\end{corollary}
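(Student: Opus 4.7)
The plan is essentially to chain together the two lemmas cited just before the statement, since the corollary was evidently designed to be read off from them. First I would invoke \cref{lem:uniqueconvexcombinationintermsofindependentvectors}, applied to the $k$ vectors $v_1,\ldots,v_k$ (so that the role of the index $k+1$ in the lemma is played here by $k$, and the role of $k$ by $k-1$). The hypothesis that $v_1-v_k,\ldots,v_{k-1}-v_k$ are linearly independent is exactly the condition of that lemma, so I would conclude that every element of $C=\co(v_1,\ldots,v_k)$ has a unique convex combination in terms of $v_1,\ldots,v_k$.

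Next I would feed this conclusion into \cref{lem:uniquedecompositionareextreme}, taking $S=\set{v_1,\ldots,v_k}$ and $m=k$. The lemma's hypothesis (every element of $C$ has a unique convex decomposition in terms of $S$) is precisely what I just established, so it gives directly that $\set{v_1,\ldots,v_k}$ is a minimal set generating $C$.

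Finally, minimality combined with \cref{lem:theminimalsetisthesetofextrema} (equivalently \cref{cor:characterisationofminimalsetsintermsofextremepointsandgenerating}) identifies $\set{v_1,\ldots,v_k}$ with the set of extreme points of $C$; in particular each $v_i$ is extreme. That yields both halves of the corollary's conclusion.

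Honestly, there is no real obstacle here: the statement is a two-line consequence of lemmas that have already been proved. The only thing worth being careful about is a mild indexing mismatch between the statement and \cref{lem:uniqueconvexcombinationintermsofindependentvectors} (the lemma is phrased with $k+1$ vectors and $k$ difference vectors, while the corollary uses $k$ vectors and $k-1$ difference vectors), but this is a cosmetic relabelling, not a substantive issue.
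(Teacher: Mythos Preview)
Your proposal is correct and follows exactly the route the paper intends: the corollary is stated as an immediate consequence of \cref{lem:uniqueconvexcombinationintermsofindependentvectors} and \cref{lem:uniquedecompositionareextreme}, and you chain them precisely as designed. Your additional appeal to \cref{lem:theminimalsetisthesetofextrema} to extract the ``extreme points'' half is a reasonable bit of tidiness, since the \emph{statement} of \cref{lem:uniquedecompositionareextreme} only asserts minimality (even though its proof establishes extremeness along the way).
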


\section{Cones}\label{sec:cones}

In this section, I develop a theory similar for cones as I did for the convex spaces. A cone is subset $K$ of a real vector space so that for all $x,y\in K$, and $\alpha\ge 0$, also $\alpha x$ and $x+y$ are in $K$ and if $-K=\set{-x:x\in K}$, then $K\cap (-K)=\set 0$. The cone generated by a set $S$ is the set 
\[
\cone(S)=\set{\sum_{k=1}^N \alpha_kx_k:N\in\NN, \alpha_1,\ldots,\alpha_N\ge 0},
\]
provided $\cone(S)\cap(-\cone(S))=\set 0$. I am again interested when every element of $\cone(S)$ can be written as a unique decomposition 
\[
\sum_{k=1}^N \alpha_kx_k\]
of elements $x_1,\ldots,x_N\in S$. I am also interested in smallest subsets $S\subseteq K$ that generate $K$. 

The following definition is taken from \cite[Definition 1.1.1]{KalauchvanGaans2018}. 

\begin{definition}\label{def:coneandwedge}
    A wedge $K$ is a subset of a vector space so that when $x,y\in K$ then also $x+y\in K$ and if $\alpha\ge 0$ then also $\alpha x \in K$. If, additionally, $-K= \set{-x : x\in K}$, and $K$ satisfies $K\cap (-K)=\set 0$, then we call $K$ a cone.  
\end{definition}

\begin{definition}\label{def:wedgeorconegeneratedbysetA}
    Let $A$ be a subset of a vector space. We define the wedge generated by $A$ as the set \begin{align*}
     \we(A)=   \set{\sum_{i=1}^n \alpha_i x_i: n\in\NN, \alpha_1,\ldots,\alpha_n\ge 0, x_1,\ldots,x_n\in A}.
    \end{align*}
    Note that $A$ is a wedge. If additionally  $\we(A)$ is a cone, then we say that $\cone(A):=\we(A)$ is the cone generated by $A$. 
\end{definition}
Note that the wedge generated by the empty set is $\set 0$. 

\begin{definition}\label{def:uniquedecompositionwedgeorcone}
    Let $A$ be a subset of a real vector space. Let $x\in \we(A)$. Then $x$ has a unique decomposition in terms of $A$, if for mutually different elements $x_1,\ldots,x_m\in A$, and arbitrary  $\alpha_1,\ldots,\alpha_n,\beta_1,\ldots,\beta_n\ge 0$, 
    \begin{align*}
        &x=\sum_{i=1}^n \alpha_i x_i = \sum_{i=1}^n \beta_i x_i,     \end{align*}
    implies $\alpha_i=\beta_i$, for all $i\in\set{1,\ldots,n}$. 
\end{definition}

\begin{lemma}
    Let $A$ be a subset of a real linear space. Then the following are equivalent:
    \begin{enumerate}
        \item $0\in \we(A)$ does not have a unique decomposition,
        \item every element in $\we(A)$ does not have a unique decomposition.  
    \end{enumerate}
\end{lemma}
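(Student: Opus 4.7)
The plan is to establish the two implications of the equivalence separately. The direction (2)$\Rightarrow$(1) is immediate: $0\in \we(A)$ (take a single term with coefficient zero, or use the empty sum), so if every element of $\we(A)$ fails the uniqueness condition, then $0$ does in particular.

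The substance lies in (1)$\Rightarrow$(2). By hypothesis there are mutually different $x_1,\ldots,x_n\in A$ and coefficients $\alpha_1,\ldots,\alpha_n,\beta_1,\ldots,\beta_n\ge 0$ with $\sum_{i=1}^n\alpha_i x_i=\sum_{i=1}^n\beta_i x_i=0$ and $\alpha_{i_0}\neq\beta_{i_0}$ for some $i_0$. Given an arbitrary $y\in\we(A)$, I would fix any wedge-decomposition $y=\sum_{j=1}^m\gamma_j y_j$ with $y_1,\ldots,y_m\in A$ mutually different and $\gamma_j\ge 0$. The key manoeuvre is to enumerate the union $\set{x_1,\ldots,x_n}\cup\set{y_1,\ldots,y_m}$ as a single list $z_1,\ldots,z_p$ of mutually different elements of $A$, and pad the three coefficient tuples with zeros to obtain $(\alpha'_k)$, $(\beta'_k)$, $(\gamma'_k)$ indexed over $\set{1,\ldots,p}$, still satisfying $\sum_k\alpha'_k z_k=\sum_k\beta'_k z_k=0$ and $\sum_k\gamma'_k z_k=y$.

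Superimposing the two decompositions of $0$ on the decomposition of $y$ then gives
\[
y=\sum_{k=1}^p(\gamma'_k+\alpha'_k)z_k=\sum_{k=1}^p(\gamma'_k+\beta'_k)z_k,
\]
both of which are legitimate wedge-decompositions since the coefficients are nonnegative and the $z_k$ are mutually different in $A$. At the index $k$ with $z_k=x_{i_0}$ the two tuples disagree (they share the term $\gamma'_k$ but differ via $\alpha'_k$ vs.\ $\beta'_k$), so by \cref{def:uniquedecompositionwedgeorcone} the element $y$ also fails to have a unique decomposition.

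I do not anticipate any real obstacle; the whole argument is bookkeeping and uses nothing more than closure of $\we(A)$ under addition. The only place to be mildly careful is the "mutually different" clause of \cref{def:uniquedecompositionwedgeorcone}: one must amalgamate the two index sets into one common list before comparing coefficients, and zero-padding handles this cleanly.
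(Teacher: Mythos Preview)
Your proof is correct and follows the same core idea as the paper's: take a nontrivial relation witnessing the failure of uniqueness at $0$, and superimpose it on an arbitrary decomposition of $y$ to produce two distinct decompositions of $y$. Your write-up is in fact more careful than the paper's, which simply writes $x=\sum\mu_k y_k+\sum\lambda_k x_k$ without explicitly merging the two index sets into a single list of mutually different elements as \cref{def:uniquedecompositionwedgeorcone} requires; your zero-padding manoeuvre addresses exactly that point.
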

\begin{proof}
    Obviously, 2 implies 1. Now assume 1. Then there is an $N\in \NN$ and there are  positive scalars $\lambda_1,\ldots,\lambda_N$ and $x_1,\ldots,x_N\in A$ so that \[
    0 = \sum_{k=1}^N \lambda_k x_k. 
    \]
    Let $x\in \we(A)$. Then there are $M\in \NN$ and $\mu_1,\ldots,\mu_M$, $y_1,\ldots,y_M\in A$ so that \[
    x = \sum_{k=1}^M \mu_ky_k.
    \]
    Then \[
    x=x+0=\sum_{k=1}^M \mu_ky_k + \sum_{k=1}^N \lambda_k x_k
    \]
    is another representation of $x$. 
\end{proof}

\begin{lemma}
    If $0\in\we(A)$ has a uniquely decomposition in terms of $A$, then $\we(A)$ is a cone. 
\end{lemma}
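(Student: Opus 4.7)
The plan is to prove directly that $\we(A)\cap(-\we(A))=\set 0$, since by \cref{def:coneandwedge} this is the only extra condition that a wedge must satisfy to be a cone. I take an arbitrary $y\in\we(A)\cap(-\we(A))$ and aim to deduce $y=0$ from the uniqueness hypothesis on the decomposition of $0$.

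By definition of $-\we(A)$ both $y\in\we(A)$ and $-y\in\we(A)$, so I can write
\[
y=\sum_{i=1}^n\alpha_ia_i\qquad\text{and}\qquad -y=\sum_{j=1}^m\beta_jb_j,
\]
with $\alpha_i,\beta_j\ge 0$, the $a_i$ mutually distinct elements of $A$, and the $b_j$ mutually distinct elements of $A$. Adding the two representations yields $0=\sum_i\alpha_ia_i+\sum_j\beta_jb_j$. To match the precise shape of \cref{def:uniquedecompositionwedgeorcone}, I would enumerate the distinct elements of $\set{a_1,\ldots,a_n}\cup\set{b_1,\ldots,b_m}$ as $c_1,\ldots,c_p$ and collect coefficients to obtain
\[
0=\sum_{k=1}^p\gamma_kc_k,
\]
where each $\gamma_k$ is a sum of some of the nonnegative $\alpha_i$ and $\beta_j$, and hence $\gamma_k\ge 0$.

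The key step is then the invocation of the hypothesis. Since $0=\sum_{k=1}^p 0\cdot c_k$ is a second decomposition of $0$ in terms of the mutually distinct $c_1,\ldots,c_p\in A$, the uniqueness of the decomposition of $0$ forces $\gamma_k=0$ for every $k$. Because each $\gamma_k$ is a sum of nonnegative numbers, every $\alpha_i$ and every $\beta_j$ appearing in it must itself be zero, and therefore $y=\sum_i\alpha_ia_i=0$. This shows $\we(A)\cap(-\we(A))=\set 0$, so $\we(A)$ is a cone. I do not anticipate a genuine obstacle in this argument; the only care required is the bookkeeping when merging the two sums into one with mutually distinct generators, so that the uniqueness hypothesis can be applied against the trivial all-zeros representation of $0$.
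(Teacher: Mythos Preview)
Your proof is correct and follows essentially the same route as the paper: take $y\in\we(A)\cap(-\we(A))$, add decompositions of $y$ and $-y$ to obtain a nontrivial-looking decomposition of $0$, and invoke the uniqueness hypothesis against the trivial all-zeros decomposition to force every coefficient to vanish. Your explicit merging of the two index sets into a single list of mutually distinct generators is in fact slightly more careful than the paper's version, which tacitly assumes a common list $x_1,\ldots,x_n$ from the outset.
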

\begin{proof}
    Let $x\in \we(A)\cap(-\we(A))$. So $x,-x \in \we(A)$. Let 
    \begin{align*}
        x = \sum_{i=1}^n \alpha_ix_i,\\
        -x = \sum_{i=1}^n \beta_ix_i,\\
        \alpha_1,\ldots,\alpha_n,\beta_1,\ldots,\beta_n\ge 0, \quad x_1,\ldots,x_n\in A
    \end{align*}
    be decompositions of $x$ and $-x$. 
    It follows that 
    \begin{align*}
        0 = x+-x = \sum_{i=1}^n (\alpha_i+\beta_i)x_i 
    \end{align*}
    is the unique decomposition of 0. So $\alpha_i+\beta_i=0$ for all $i\in\set{1,\ldots,n}$. So $\alpha_i=\beta_i=0$ for all $i\in\set{1,\ldots,n}$. So $x=0$. So $\we(A)$ is a cone. 
\end{proof}

\begin{lemma}\label{lem:coneuniquedecompositionifandonlyifelementsarelinearlyindependent}
 Let $A$ be a subset of a real vector space. Then every element in $\we(A)$ has a unique decomposition in terms of $A$ if and only if all elements in $A$ are linearly independent. 
\end{lemma}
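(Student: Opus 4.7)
The plan is to prove both implications directly from \cref{def:uniquedecompositionwedgeorcone}. First I would handle the ``if'' direction, which is routine. Suppose the elements of $A$ are linearly independent, fix $x\in \we(A)$, and assume $x=\sum_{i=1}^n \alpha_i x_i = \sum_{i=1}^n \beta_i x_i$ for mutually different $x_1,\ldots,x_n\in A$ and nonnegative scalars $\alpha_i,\beta_i$. Subtracting yields $\sum_{i=1}^n(\alpha_i-\beta_i)x_i=0$, and linear independence of the distinct elements $x_1,\ldots,x_n$ forces $\alpha_i=\beta_i$ for each $i$; hence every element of $\we(A)$ has a unique decomposition.

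For the ``only if'' direction I would argue by contraposition: assume $A$ is linearly dependent and exhibit an element of $\we(A)$ with two distinct nonnegative decompositions. Pick mutually different $x_1,\ldots,x_n\in A$ and real scalars $c_1,\ldots,c_n$, not all zero, with $\sum_{i=1}^n c_i x_i=0$. Split the index set into $I=\set{i:c_i>0}$ and $J=\set{i:c_i<0}$; since not all $c_i$ vanish, at least one of $I,J$ is nonempty. Define two coefficient tuples on the common list $x_1,\ldots,x_n$ by $\alpha_i=c_i$ for $i\in I$ and $\alpha_i=0$ otherwise, and $\beta_i=-c_i$ for $i\in J$ and $\beta_i=0$ otherwise. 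Both tuples are nonnegative, and rearranging $\sum_{i=1}^n c_i x_i=0$ gives
\[
\sum_{i=1}^n \alpha_i x_i = \sum_{i\in I}c_i x_i = \sum_{j\in J}(-c_j)x_j = \sum_{i=1}^n \beta_i x_i.
\]
Call this common element $y\in\we(A)$. For any index $i_0$ with $c_{i_0}\ne 0$, exactly one of $\alpha_{i_0},\beta_{i_0}$ is strictly positive while the other is zero, so the two coefficient tuples really are different. Hence $y$ does not admit a unique decomposition.

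The only subtlety is to make sure the construction still witnesses non-uniqueness in the edge case where one of $I$ or $J$ is empty (so that $y=0$): in that case one of the tuples is the all-zeros tuple while the other is nontrivial, and this is still a bona fide pair of distinct nonnegative decompositions in the sense of \cref{def:uniquedecompositionwedgeorcone}. Beyond this minor bookkeeping point I do not expect any real obstacle; the argument is essentially the standard splitting of a linear dependence into positive and negative parts, adapted to the wedge setting.
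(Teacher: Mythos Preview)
Your proof is correct and follows essentially the same approach as the paper's own proof: both directions are handled directly, and for the contrapositive you split a nontrivial linear dependence $\sum c_i x_i=0$ into its positive and negative parts to produce two distinct nonnegative decompositions of the same element of $\we(A)$. The only difference is that you spell out the edge case where one of $I,J$ is empty, which the paper leaves implicit.
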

\begin{proof}
    Suppose the elements in $A$ are linearly independent, then every element in $\we(A)$ has a unique decomposition. 
    Suppose $A$ is not linearly independent. So there are  elements $\lambda_1,\ldots,\lambda_n\in\re\weg\set 0$, $n\ge 1$, and $x_1,\ldots,x_n\in A$ so that 
    \begin{align*}
        0 = \sum_{i=1}^n \lambda_i x_i. 
    \end{align*}
    Define $I=\set{i:\lambda_i>0}$ and $J=\set{i:\lambda_i<0}$. At least one of $I$ or $J$ is not empty, so 
    \begin{align*}
        x := \sum_{i\in I} \lambda_i x_i = \sum_{i\in J}(-\lambda_i)x_i 
    \end{align*}
    in $\we(A)$ has no unique decomposition. 
\end{proof}

So when every element in $\we(A)$ has a unique decomposition in terms of $A$, then $\we(A)=\cone(A)$. 

\begin{definition}
    Let $K$ be a wedge and let $x\in K\weg\set 0$. We call $x$ an extreme point when for $y,z\in K\weg\set 0$, $x=y+z$ implies $y=\alpha x, z=\beta x$ for some $\alpha,\beta>0$.  
\end{definition}
\begin{remark}
    When $x$ is an extreme point, then for every $\alpha>0$, $\alpha x $ is also an extreme point. 
\end{remark}

\begin{definition}
    Let $A$ be a subset of a real linear space. We call $A$ minimal, when for every $x\in A$, $\we(A\weg \set x)\neq \we(A)$. 
\end{definition}

Every wedge that is generated by a finite set has a minimum set. 
\begin{lemma}
  Let $K$ be a wedge generated by a finite set $A$. Then $K$ has a minimal set.
\end{lemma}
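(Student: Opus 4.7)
The plan is to imitate the proof of \cref{lem:finitegeneratedconvexsethasminimalset} exactly. Since the generating set $A$ is finite, I can peel off redundant elements one at a time; strict finiteness guarantees that the process terminates after finitely many steps in a (possibly empty) minimal set.

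More precisely, I would set $A_0 = A$ and iteratively construct $A_0 \supseteq A_1 \supseteq A_2 \supseteq \ldots$ as follows. If $A_i$ is minimal, stop and output $A_i$. Otherwise, by definition of minimality there exists some $x \in A_i$ with $\we(A_i \setminus \set x) = \we(A_i)$; choose such an $x$ and set $A_{i+1} = A_i \setminus \set x$. By induction $\we(A_i) = \we(A) = K$ for every $i$ produced by the process.

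Since $|A_{i+1}| = |A_i| - 1$ whenever the algorithm proceeds, and $|A_0| < \infty$, there is a smallest index $n$ at which the algorithm terminates; at that index $A_n$ is minimal by construction, and $\we(A_n) = K$. Note that the corner case $A = \leeg$ (so $K = \set 0$) is handled automatically, since the condition in the definition of minimality is then vacuously satisfied, so $\leeg$ itself is minimal.

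There is no real obstacle here: the only point that requires a moment of care is making sure that the definition of minimality behaves correctly at the endpoints of the iteration, in particular that if the algorithm runs all the way down to the empty set then the empty set still qualifies as minimal for the wedge $\set 0$ it generates, which is immediate from \cref{def:wedgeorconegeneratedbysetA}.
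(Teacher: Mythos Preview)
Your proposal is correct and follows exactly the same greedy deletion argument as the paper's proof: start with $A_0=A$, remove a redundant element whenever $A_i$ is not minimal, and use finiteness to guarantee termination at a minimal generating set. The only difference is that you spell out the empty-set corner case explicitly, whereas the paper leaves it implicit in the observation that a non-minimal $A_i$ must be nonempty.
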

\begin{proof}
    Let $A_0=A$. Inductively, for $i\ge 0$, if $A_i$ is not minimal, then $A_i$ is not empty, and there is an $x\in A_i$ so that $\we(A_i\weg \set x)=\we(A_i)$. In this case define $A_{i+1}$. If $A_i$ is minimal, set $i_0=i$ and terminate the procedure.  As $A_{i+1}$ has one element less than $A_i$ this algorithm eventually terminates after $i_0\ge 0$ finite steps and $\we(A_{i_0})=K$ and $A_{i_0}$ is minimal. 
\end{proof}

\begin{lemma}\label{lem:aminimalsetthatgeneratesaconeconsistofonlyextremeelements}
    Let $A$ be a minimal set that generates a \textit{cone}, then every element of $A$ is an extremum. 
    
    If $A$ is an nonempty subset of a real linear space, then for every extreme element $x\in \we(A)$, there is some $\alpha>0$, so that $\alpha x\in A$. 
\end{lemma}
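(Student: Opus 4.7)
I would prove the two statements separately.

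For the first statement, a preliminary observation is that any minimal $A$ satisfies $0\notin A$, since otherwise $\we(A\weg\set 0)=\we(A)$ would violate minimality. Given $x\in A$, I would assume for contradiction that $x$ is not extreme, so there exist $y,z\in\cone(A)\weg\set 0$ with $x=y+z$ such that not both $y,z$ are positive multiples of $x$. Expanding $y$ and $z$ as nonnegative combinations of elements of $A$ and collecting the coefficient of $x$, I obtain $x=cx+\sum_{a\in A\weg\set x}d_a a$ for some $c\ge 0$ and $d_a\ge 0$. Three cases then arise. If $c<1$, dividing by $1-c$ writes $x$ as a nonnegative combination of $A\weg\set x$, so $\we(A\weg\set x)=\we(A)$, contradicting minimality. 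If $c>1$, then $(c-1)x=-\sum_{a\neq x}d_a a$ lies in both $\cone(A)$ and $-\cone(A)$, hence equals $0$, forcing $x=0$, contradicting $0\notin A$. If $c=1$, then $\sum_{a\neq x}d_a a=0$; applying the cone property to any nonzero term $d_{a_0}a_0$ forces $d_{a_0}a_0\in\cone(A)\cap(-\cone(A))=\set 0$ and hence $a_0=0$, again contradicting $0\notin A$. Thus all $d_a=0$, which means $y=\lambda_x x$ and $z=\mu_x x$ with $\lambda_x,\mu_x>0$ (since $y,z\ne 0$), contradicting the assumption that not both are positive multiples of $x$.

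For the second statement, I would take an extreme $x\in\we(A)$ and write $x=\sum_{i=1}^n\lambda_i a_i$ with $\lambda_i>0$ and $a_1,\ldots,a_n\in A$ distinct; terms with $a_i=0$ can be discarded so that all $a_i\ne 0$. If $n=1$, then $\lambda_1^{-1}x=a_1\in A$ and we take $\alpha=\lambda_1^{-1}$. If $n\ge 2$, set $y=\lambda_1 a_1$ and $z=\sum_{i\ge 2}\lambda_i a_i$. If $z=0$, then $x=y=\lambda_1 a_1$ and we reduce to the case $n=1$. Otherwise both $y,z\in\we(A)\weg\set 0$, and extremality of $x$ yields $y=\alpha x$ for some $\alpha>0$, whence $(\alpha/\lambda_1)x=a_1\in A$.

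The main obstacle is the analysis of the case $c=1$ in Part 1: without any linear independence hypothesis, I must exploit the cone property $K\cap(-K)=\set 0$ together with $0\notin A$ to conclude that all $d_a$ vanish. The other cases of Part 1 follow from minimality, and Part 2 reduces to a short extremality argument once the representation has been split into two summands.
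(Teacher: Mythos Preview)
Your proof is correct and follows essentially the same approach as the paper: for Part~1 you isolate the coefficient of $x$ in the expansion of $y+z$ and split into the three cases $c<1$, $c>1$, $c=1$, invoking minimality in the first and the cone property $K\cap(-K)=\set 0$ in the latter two, exactly as the paper does with its coefficient $(1-(\alpha_1+\beta_1)\gamma)$; for Part~2 you split the representation of $x$ into two summands and apply extremality, which is precisely the content of the paper's terse argument that $x\notin\we(A\weg\set{\alpha x:\alpha>0})$. Your write-up is in fact slightly cleaner, since you work directly with the coefficient of $x$ rather than first arguing that some $x_i$ equals $\gamma x$.
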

\begin{proof}
Let $A$ be a minimal set. Note that for all $x,y\in A$, $x\neq y$:   $x$ is nonzero and, for every $\alpha>0$, $y\neq \alpha x$, as otherwise $A$ is not minimal. 

    Let $x\in A$ and suppose that for some $y,z\in\cone(A)$,  $x = y +z$. Then there are $\alpha_1,\ldots,\alpha_n,\beta_1,\ldots,\beta_n \ge 0$, with $\alpha_i+\beta_i>0$, and $x_1,\ldots,x_n\in A$, $n\ge 1$, so that 
    \begin{align*}
        y = \sum_{i=1}^n \alpha_i x_i, \\
        z = \sum_{i=1}^n \beta_i x_i,
        \intertext{so}
        x = \sum_{i=1}^n (\alpha_i+\beta_i)x_i. 
    \end{align*}
If $n=1$, then $\alpha_1+\beta_1=1$, and $x$ is extreme. Suppose $n\ge 2$. 
If for all $i,$ and for all $\alpha>0$, $x_i\neq \alpha x$, then $x\in \we(A\weg\set x)$, so $A$ is not minimal. Contradiction. 
So, for some $i$, $x_i=\gamma x$ for certain $\gamma>0$. After relabelling, if necessary, we may assume $i=1$. So 
\begin{align*}
    (1- (\alpha_1+\beta_1)\gamma ) x = \sum_{i=2}^n (\alpha_i+\beta_i)x_i.  
\end{align*}
If $ (1- (\alpha_1+\beta_1)\gamma )>0$, then 
\begin{align*}
         x = \sum_{i=2}^n \frac{\alpha_i+\beta_i}{1- (\alpha_1+\beta_1)\gamma }x_i.  
\end{align*}
and $\we(A)=\we(A\weg\set x)$, so $A$ is not minimal. Contradiction. If $ (1- (\alpha_1+\beta_1)\gamma )<0$, then 
\begin{align*}
         -x = \sum_{i=2}^n \frac{\alpha_i+\beta_i}{ (\alpha_1+\beta_1)\gamma -1 }x_i.  
\end{align*}
so $\we(A)$ is not a cone.  Contradiction. 
If $(1- (\alpha_1+\beta_1)\gamma ) = 0$, then 
\begin{align*}
    0 =  \sum_{i=2}^n (\alpha_i+\beta_i)x_i  
\end{align*}
As $x_2\neq 0$ and $\alpha_2+\beta_2>0$, it follows that $n\ge 3$. So 
\begin{align*}
    -x_2 = \sum_{i=3}^n \frac{\alpha_i+\beta_i}{\alpha_2+\beta_2}x_i.
\end{align*}  
So $\we(A)$ is not a cone. Contradiction. As these were all possibilities, we conclude that $x$ is extreme.

Let $x\in \we(A)$ be extreme. Then for all $y,z\in \we(A)$,  $x = y + z$ implies that $y=\alpha x , z=\beta x $ for some $\alpha,\beta\ge 0$. So $\we(A\weg\set{\alpha x:\alpha>0})$ does not contain $x$. So for some $\alpha>0$, $\alpha x\in A$. 
\end{proof}

\begin{lemma}\label{lem:aconegeneratingsetofextremepointsisminimal}
Let $K$ be a cone. Let $A$ be a set of all extreme elements, so that for all $x,y\in A, x\neq y$, we have that $x\neq \alpha y$, for all $\alpha>0$. If $K$ is generated by $A$, then $A$ is minimal. 
\end{lemma}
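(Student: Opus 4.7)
My plan is to argue by contradiction. Suppose $A$ is not minimal; then there is some $x_0\in A$ with $\we(A\weg\set{x_0})=\we(A)=K$. In particular $x_0\in\we(A\weg\set{x_0})$, so after discarding any zero coefficient and consolidating repeated generators I may write
\[
x_0=\sum_{i=1}^n \alpha_i y_i,\qquad \alpha_i>0,\quad y_1,\ldots,y_n\in A\weg\set{x_0}\text{ pairwise distinct},\ n\ge 1.
\]
My goal is to extract from this a contradiction with the hypothesis that distinct elements of $A$ are never positive scalar multiples of one another.

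The case $n=1$ is immediate: $x_0=\alpha_1 y_1$ with $\alpha_1>0$ and $y_1\neq x_0$, directly contradicting the no-scalar-multiples hypothesis. For $n\ge 2$ I would split the sum as $x_0=u+v$ with $u=\alpha_1 y_1$ and $v=\sum_{i=2}^n\alpha_i y_i$; both summands lie in $K=\we(A)$. To invoke the defining property of an extreme element I need $u,v\in K\weg\set 0$. That $u\neq 0$ is clear, since $y_1$ is extreme (hence nonzero) and $\alpha_1>0$. Once I know $v\neq 0$, applying extremality of $x_0$ to the decomposition $x_0=u+v$ gives $u=\beta x_0$ for some $\beta>0$, so $y_1=(\beta/\alpha_1)x_0$, again contradicting the hypothesis on $A$.

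The one delicate step is checking that $v\neq 0$, and this is where the \emph{cone} hypothesis (not just wedge) is essential. If $v=0$, then for $n=2$ I get $\alpha_2 y_2=0$ directly, forcing the extreme element $y_2$ to vanish; for $n\ge 3$ I rewrite $\alpha_2 y_2=-\sum_{i=3}^n\alpha_i y_i$, whose right-hand side lies in $-K$ while the left lies in $K$, so $\alpha_2 y_2\in K\cap(-K)=\set 0$, again forcing $y_2=0$. Either way $y_2=0$ contradicts $y_2$ being extreme. I expect this cone-vs-wedge step to be the only subtle point; the remainder is a straightforward unwinding of the definitions of extremality and of the generating set, combined with the positivity of the coefficients $\alpha_i$.
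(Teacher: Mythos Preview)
Your proof is correct and follows essentially the same route as the paper: assume $A$ is not minimal, express some $x_0\in A$ as a positive combination of other elements of $A$, and derive a contradiction with extremality together with the no-scalar-multiples hypothesis. Your version is in fact more careful than the paper's, which simply asserts that such a representation makes $x_0$ non-extreme; you explicitly verify the $v\neq 0$ step via $K\cap(-K)=\set 0$, which the paper leaves implicit.
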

\begin{proof}
    Suppose $A$ is not minimal, then for some $x\in A$ and for some $x_1,\ldots,x_n\in A\weg\set x$, and $\alpha_1,\ldots,\alpha_n>0$, \[
    x = \sum_{i=1}^n\alpha_ix_i.
    \]
    As $x_i\neq \gamma x$ for all $\gamma>0$ and $i\in\set{1,\ldots,n}$, it follows that $x$ is not extreme. Contradiction. So $A$ is minimal. 
\end{proof}

A corollary to \cref{lem:aconegeneratingsetofextremepointsisminimal,lem:aminimalsetthatgeneratesaconeconsistofonlyextremeelements} is 
\begin{corollary}\label{cor:aconehasaminimalsetifandonlyifitisgeneratedbyitsextremeelements}
    Let $K$ be a cone. Then $K$ has a minimal set if and only if $K$ is generated by its extreme points. 
\end{corollary}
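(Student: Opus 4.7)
The plan is to derive the corollary by directly chaining the two preceding lemmas, which together already capture both directions once a small bookkeeping issue about scalar multiples is handled. Recall that if $x$ is extreme, then every $\alpha x$ with $\alpha>0$ is also extreme, so the set of extreme points is a union of open rays. A minimal set cannot contain two points on the same ray, so the correct object on both sides of the equivalence is really a set of \emph{representatives} of the rays of extreme points.

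For the forward direction, suppose $K$ has a minimal set $A$. Then \cref{lem:aminimalsetthatgeneratesaconeconsistofonlyextremeelements} immediately gives that every element of $A$ is extreme, and since $K = \we(A)$ by assumption, $K$ is generated by the extreme points (namely, by those extreme points lying in $A$, but in particular by all of them, since adjoining more elements of $\we(A)$ to the generating set does not change the generated wedge).

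For the backward direction, assume $K$ is generated by the set $E$ of its extreme elements. Using the axiom of choice, pick one representative from each ray $\set{\alpha x:\alpha>0}$ with $x\in E$; call the resulting set $A$. Since every $y\in E$ is a positive scalar multiple of some $a\in A$, we still have $K = \we(E) = \we(A)$. By construction, $A$ consists of extreme elements, and for distinct $x,y\in A$ no positive scalar multiple relation holds, so the hypotheses of \cref{lem:aconegeneratingsetofextremepointsisminimal} are met and $A$ is minimal.

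The main obstacle, if any, is the minor subtlety that the second preceding lemma requires $A$ to avoid pairs of positively-proportional elements; the choice of ray representatives handles this cleanly. The rest is simply invoking the two lemmas in order.

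\begin{proof}
Suppose first that $K$ has a minimal set $A$. By \cref{lem:aminimalsetthatgeneratesaconeconsistofonlyextremeelements}, every element of $A$ is extreme. Since $K=\we(A)$ and $A$ is contained in the set of extreme elements of $K$, $K$ is generated by its extreme points.

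Conversely, suppose $K$ is generated by its set $E$ of extreme elements. For each ray $\set{\alpha x:\alpha>0}$ meeting $E$, pick one representative, and let $A$ be the resulting set. Since every element of $E$ is a positive scalar multiple of some element of $A$, we have $\we(A)=\we(E)=K$. By construction, the elements of $A$ are extreme and no two distinct elements of $A$ are positive scalar multiples of each other. Hence by \cref{lem:aconegeneratingsetofextremepointsisminimal}, $A$ is a minimal set for $K$.
\end{proof}
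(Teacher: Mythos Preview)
Your proof is correct and follows essentially the same approach as the paper, which simply states that the result is a corollary to \cref{lem:aminimalsetthatgeneratesaconeconsistofonlyextremeelements} and \cref{lem:aconegeneratingsetofextremepointsisminimal} without further detail. You spell out the chaining of the two lemmas and, in particular, make explicit the step of passing from the full set $E$ of extreme elements to a set of ray representatives so that the hypothesis of \cref{lem:aconegeneratingsetofextremepointsisminimal} is satisfied---a detail the paper leaves implicit.
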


\begin{lemma}\label{lem:uniquenessofminimalsetthatgeneratescone}
    Let $K$ be a cone that is generated by minimal sets $A$ and $B$, so $K=\cone(A)=\cone(B)$. Then for every $x\in A$ there is an $\alpha>0$ so that $\alpha x\in B$, and vice versa, for every $y\in B$ there is a $\beta>0$ so that $\beta y \in A$. 
\end{lemma}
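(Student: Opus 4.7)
The plan is to exploit the two parts of \cref{lem:aminimalsetthatgeneratesaconeconsistofonlyextremeelements} in tandem, using that extremeness is intrinsic to the cone $K$ (and therefore independent of whether we view $K$ as $\cone(A)$ or $\cone(B)$). Since $A$ is a minimal generating set of the cone $K$, the first part of that lemma tells us that every element of $A$ is an extreme point of $K$; likewise for $B$. The second part of the same lemma tells us that every extreme element of a wedge $\we(C)$ is a positive scalar multiple of some element of $C$. Combining these two facts with the identification $\cone(A) = K = \cone(B)$ gives the result immediately.

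In detail, I would argue as follows. Let $x \in A$. By the first part of \cref{lem:aminimalsetthatgeneratesaconeconsistofonlyextremeelements} applied to $A$, the element $x$ is extreme in $K = \cone(A)$. Since $K = \cone(B) = \we(B)$, the element $x$ is in particular an extreme element of $\we(B)$. Applying the second part of \cref{lem:aminimalsetthatgeneratesaconeconsistofonlyextremeelements} with $B$ in place of $A$, there exists some $\alpha > 0$ so that $\alpha x \in B$. The symmetric statement, namely that for every $y \in B$ there exists $\beta > 0$ with $\beta y \in A$, follows by swapping the roles of $A$ and $B$ in the above argument.

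I do not expect any genuine obstacle: the whole proof is a bookkeeping exercise that combines two already-established facts. The only point that requires a brief justification in the write-up is that the notion of being extreme depends only on the ambient cone $K$ and not on a particular generating set, which is immediate from \cref{def:coneandwedge} and the definition of extremeness; this is what allows us to transfer extremeness of $x$ in $\cone(A)$ to extremeness of $x$ in $\we(B)$.
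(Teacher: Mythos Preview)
Your proof is correct and follows essentially the same approach as the paper: the paper's one-line argument (``A minimal set consists of extrema of $K$, which are unique up to a constant'') is precisely the combination of the two parts of \cref{lem:aminimalsetthatgeneratesaconeconsistofonlyextremeelements} that you spell out. Your version is just a more explicit unpacking of the same reasoning.
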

\begin{proof}
    A minimal set consists of extrema of $K$, which are unique up to a constant. 
\end{proof}

\begin{lemma}\label{lem:aconegeneratedbyAwithuniquedecompositionthenAisminimalandconsistofextremepoints}
    If every element in $\cone(A)$ has a unique decomposition in terms of $A$, then $A$ is a minimal set, and the elements of $A$ are extrema of $\cone(A)$.   
\end{lemma}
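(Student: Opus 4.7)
The plan is to derive both conclusions directly from the uniqueness hypothesis by comparing two competing decompositions in each case.

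For minimality, I would proceed by contraposition. Suppose $A$ is not minimal, so there exists $x\in A$ with $\we(A\weg\set x)=\we(A)$. Then $x$ itself has a representation $x=\sum_{i=1}^n\alpha_i x_i$ with $x_1,\ldots,x_n\in A\weg\set x$ mutually different and $\alpha_i\ge 0$. Comparing this with the trivial decomposition $x=1\cdot x+0\cdot x_1+\ldots+0\cdot x_n$ over the finite family $\set{x,x_1,\ldots,x_n}\subseteq A$ gives two distinct decompositions of $x$ in terms of elements of $A$, contradicting the hypothesis. Hence $A$ is minimal.

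For extremality, let $x\in A$ and suppose $x=y+z$ with $y,z\in\cone(A)\weg\set 0$. Expand $y$ and $z$ over a common finite subset $\set{x,x_1,\ldots,x_n}\subseteq A$ containing $x$, writing $y=\alpha_0 x+\sum_{i=1}^n\alpha_i x_i$ and $z=\beta_0 x+\sum_{i=1}^n\beta_i x_i$ with all coefficients non-negative. Then
\[
x=(\alpha_0+\beta_0)x+\sum_{i=1}^n(\alpha_i+\beta_i)x_i.
\]
The trivial decomposition $x=1\cdot x+0\cdot x_1+\ldots+0\cdot x_n$ is the unique decomposition of $x$ over $\set{x,x_1,\ldots,x_n}$, so $\alpha_i+\beta_i=0$ for $i\ge 1$ and $\alpha_0+\beta_0=1$. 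Non-negativity forces $\alpha_i=\beta_i=0$ for $i\ge 1$, so $y=\alpha_0 x$ and $z=\beta_0 x$. Since $y,z\neq 0$, both $\alpha_0$ and $\beta_0$ are strictly positive, which is precisely what extremality requires.

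Because the argument is a direct application of uniqueness in each case, there is no substantial obstacle. The only point requiring a bit of care is to pad the two representations being compared so that they range over a common finite subset of $A$ (including $x$ with coefficient $0$ where needed), so that \cref{def:uniquedecompositionwedgeorcone} applies verbatim.
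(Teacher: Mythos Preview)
Your argument is correct. The minimality part is essentially the paper's own reasoning, phrased contrapositively: the paper simply notes that uniqueness forces $x\notin\cone(A\weg\set x)$ for every $x\in A$, which is exactly what you obtain by contradiction.

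For extremality the two approaches diverge. The paper does not argue directly; having established that $A$ is minimal, it invokes \cref{lem:aminimalsetthatgeneratesaconeconsistofonlyextremeelements}, which shows that any minimal generating set of a cone consists of extrema. That lemma's proof is longer and must handle several cases (including the possibility that $\we(A)$ fails to be a cone), because it works only from minimality rather than from the stronger unique-decomposition hypothesis. Your route instead exploits uniqueness a second time: comparing the trivial decomposition of $x$ with the one obtained from $y+z$ immediately pins down $y$ and $z$ as scalar multiples of $x$. This is more self-contained and shorter for the present lemma, at the cost of not yielding the more general statement that minimality alone (without unique decompositions) already forces extremality. Your remark about padding the two representations over a common finite subset of $A$ is the right technical point to flag, and it is handled correctly.
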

\begin{proof}
    As every element has a unique decomposition, for every $x\in A$, $x\notin \cone(A\weg\set x)$. So $A$ is minimal. It follows from \cref{lem:aminimalsetthatgeneratesaconeconsistofonlyextremeelements} that $A$ is a set of extreme elements. 
\end{proof}

\end{document}